\journal{Journal of \LaTeX\ Templates}
\newcommand{\R}{{\Bbb R}}
\newcommand{\N}{{\Bbb N}}
\def \n  {\nonumber}
\def \h  {\hspace{1cm}}
\newtheorem{thm}{Theorem}
\newtheorem{lemma}[thm]{Lemma}
\newtheorem{definition}[thm]{Definition}
\newproof{proof}{Proof}
\begin{document}

\begin{frontmatter}
%

\title{Nonstandard quasi-monotonicity: an application to the wave existence in a neutral KPP-Fisher equation
 }

\author[a]{ Eduardo Hern\'andez\,}
\author[b]{     Sergei Trofimchuk\footnote{Corresponding author.}}
\address[a]{Departamento de Computa\c{c}\~ao  e  Matem\'atica,  Faculdade de Filosofia,   Ci\^encias e Letras  de Ribeir\~ao Preto,  Universidade de S\~ao Paulo,  CEP 14040-901  Ribeir\~ao Preto, SP, Brazil
\\ {\rm E-mail: lalohm@ffclrp.usp.br}}
\address[b]{Instituto de Matem\'atica y F\'isica, Universidad de Talca, Casilla 747,
Talca, Chile \\ {\rm E-mail: trofimch@inst-mat.utalca.cl}}

\begin{abstract}
We revisit  Wu and Zou non-standard quasi-monotonicity  approach for proving existence of monotone  wavefronts in monostable  reaction-diffusion equations with delays.  This allows to  solve  the problem of existence  of monotone wavefronts in a neutral KPP-Fisher equation. In addition,  using some new ideas proposed recently by Solar {\it et al.},  we establish
 the uniqueness (up to a translation) of  these monotone wavefronts. \end{abstract}

\begin{keyword}monostable equation\sep quasi-monotonicity \sep non-standard order\sep
 uniqueness \sep KPP-Fisher delayed equation  \sep neutral  differential equation
 \MSC[2010] 34K12\sep 35K57\sep 92D25
\end{keyword}

\end{frontmatter}


\section{Revisiting  non-standard quasi-monotonicity approach to the wavefront existence problem}
In \cite{wzI,wz}, J. Wu and X. Zou developed a monotone iteration technique for proving the existence of monotone traveling  fronts to some classes of monostable functional differential equations with bounded delays. Later their method was extended  by Wang {\it et al}  \cite{WLR}  for the more general case of unbounded spatio-temporal delays.  A remarkable difference of the Wu-Zou work with other approaches is that their technique is constructive enough to obtain good analytical approximations of monotone  wavefronts \cite{GT,SZ}.
Obviously,  Wu and Zou iteration method can be applied only if the differential equation itself possesses monotone wavefronts.  As the studies \cite{GT,TPTJDE} show, often this happens  if and only if the nonlinearity of equation displays some kind of quasi-monotonicity.  Accordingly, \cite{wz}  considers two different types of the quasi-monotonicity property for the  monostable delayed model system
\begin{equation}\label{BE}
\partial_t u(t,x) = D\Delta u(t,x)  + f(u_t), \quad D = \mbox{diag}\{d_1,\dots,d_m\}, \ u \in \R^m_+,
\end{equation}
where $u_t$ stands for $u_t(\cdot,x) = u(t+\cdot,x) \in C([-\tau,0], \R_+^m)$ for each  fixed $x \in \R^m$ and the function $ f: C([-\tau,0], \R_+^m) \to \R^m$ is continuous and has exactly two zeros, $u_1\equiv 0$ and $u_2 = (\kappa_1, \dots, \kappa_m),$ $\kappa_j >0,$ in the rectangle $\mathcal{R}= [u_1,u_2]$ of the ordered space of constant functions $\R_+^m \subset C([-\tau,0], \R_+^m)$.

The first type is similar to (in fact, slightly stronger than) the quasi-monotonicity condition of  Martin-Smith \cite{MS}: basically, it  means that the nonlinearity $f$ of system (\ref{BE}) is monotone with respect to delayed terms.  Consequently, the second type of quasi-monotonicity proposed in \cite{wz} (see {\bf QM} below) is designed precisely to avoid this restrictive  standard monotonicity requirement on the delayed terms:

      {\begin{center}\begin{minipage}[t]{12cm}
    {\it {  ${ \bf QM}$ \it   Given $c >0$, there exists a non-negative matrix $B = \mbox{diag}\{\beta_1,\dots,\beta_m\}$,  such that $$f(\psi(-c\cdot)) - f(\phi(-c\cdot)) + B(\psi(0)-\phi(0)) \geq 0$$ for all
continuous functions $\phi, \psi \in C([-c\tau,0], \R_+^m)$ satisfying, in a componentwise way, the inequality   $0\leq \phi(s) \leq \psi(s) \leq u_2, \ s \in [-c\tau,0]$ and such that
$e^{Bt}(\psi(t) - \phi(t))$ is  componentwise nondecreasing on $[-c\tau,0]$.        }}
 \end{minipage}\end{center}

The nonstandard ordering of the space $C([-c\tau,0], \R_+^m)$ defined in two last lines of  {\bf QM}  was suggested by the previous work of  Smith and Thieme \cite{STa,STb}:
actually {\bf QM}  means monotonicity of the function $f$ with respect to this nonstandard order.  The existence of wavefronts under assumption {\bf QM}  was proved in \cite{HZ,LWT,WLR,WL,wz} for either delayed or non-local equations and also more recently for the neutral delayed systems  in \cite{HeT}.

To see the practical consequences of assuming  {\bf QM}, it is convenient
to consider the scalar case (i.e. to take $m=1$ and $u_2 = \kappa$: since the matrices $D, B$ are diagonal, this does not restrict the generality of our analysis) and focus  attention on a  functional of the typical  form  $f(\phi) = g(\phi(0), \phi(-\tau))$, where
 $g:\R^2_+ \to \R$ is  continuously differentiable  function.   Under these assumptions, {\bf QM} essentially reads as

  {\begin{center}\begin{minipage}[t]{12cm}
    {\it {  ${ \bf  Qm}$ \it  Given $c >0$, there exists $\beta > 0$
  such that $$\frak{D}:=g(A, B)- g(a, b) + \beta(A-a) \geq 0$$ for all
nonnegative $a\leq A\leq \kappa, b\leq B\leq \kappa$    such that $e^{-\beta c\tau}(B-b)  \leq (A-a)$.      }}
 \end{minipage}\end{center}

Simplifying, let us  assume additionally that $g(x,y)$ is nondecreasing in $x \in [0,\kappa]$ for each $y\in [0,\kappa]$, with
$\min \{\partial_xg(x,y),\ 0 \leq x, y \leq \kappa\} =0$ (this is true, in particular, for  the delayed KPP-Fisher equation
when $g(x,y) = x(1-y), \ \kappa = 1$).
Then  for all $a, A, b, B$ as in {\bf Qm} and some $\theta \in [b,B]$, it holds
$$\frak{D}\geq  g(A, B)- g(a, B)+ [g_2(a,\theta)+ \beta e^{-\beta c \tau}] (B-b) \geq 0$$
once
the partial derivative $g_2(x,y) := \partial_y g(x,y)$ satisfies
\begin{equation}\label{pin}
\beta e^{-\beta c \tau} \geq |g_-|, \ \mbox{where} \ g_-:  =\min \{g_2(x,y),\ 0 \leq x, y \leq \kappa\}.
\end{equation}
Under  the above described conditions,   inequality (\ref{pin}) is practically equivalent to {\bf Qm}  and it is satisfied
with the optimal $\beta = (c\tau)^{-1}$ whenever $c\tau \leq (|g_-|e)^{-1}$.  The latter  restriction on $c\tau$ indicates the range of
the applicability of the method.\footnote{As a consequence of restrictions imposed by nonstandard order on upper and lower solutions,  there are other additional bounds on $c\tau$:  however, they seem to be less important.} In the particular case of the
mentioned KPP-Fisher delayed equation (when $g_-=-1$), the above inequality becomes $c\tau \leq 1/e$ that is   far from conditions of the sharp criterion
for the existence of monotone fronts in the aforementioned equation \cite{GT,KO}. For example, this criterion shows that the only condition $\tau \leq 1/e$ is already sufficient for the existence
of monotone fronts propagating with speeds  $c \geq 2$.

The above analysis and computations suggest the following natural simplification of the hypothesis {\bf Qm} avoiding the use of the nonstandard
order in  the phase space:

 {\begin{center}\begin{minipage}[t]{12cm}
    {\it {  There exist $\alpha \geq 0, \beta > 0$
  such that $g(A, B)- g(a, b) + \alpha (A-a) + \beta(B-b) \geq 0,$ for all
nonnegative $a\leq A\leq \kappa,\  b\leq B\leq \kappa$.      }}
 \end{minipage}\end{center}

\vspace{1mm}

That is, instead of controlling monotonicity of delayed term indirectly (as in {\bf Qm},   by means of a non-standard order allowing additional relation between
$A-a$ and $B-b$), it is more natural to approach  the delayed term directly.  More generally, instead of  {\bf QM}  we can consider the following nonstandard quasi-monotonicity assumption on $f$:

 {\begin{center}\begin{minipage}[t]{12cm}
    {\it {  ${ \bf NS}$ \it     Given $c >0$, there is a  bounded linear operator  $L: C([-c\tau,0], \R_+^m) \to \R^m$   such that $$f(\psi(-c\cdot)) - f(\phi(-c\cdot)) + L(\psi-\phi) \geq 0$$ for all
  functions $\phi, \psi \in C([-c\tau,0], \R_+^m)$ satisfying, in a componentwise way, the inequality   $0\leq \phi(s) \leq \psi(s) \leq u_2, \ s \in [-c\tau,0]$.   }}
 \end{minipage}\end{center}

Remarkably, as we show in this paper, the above modification   of quasi-monotonicity conditions from \cite{wz} together with an adequate change of the respective  iterative algorithm  can  produce sharp criteria for the existence of monotone fronts.   Thus the  aim of this work is two-fold: first, we provide
an abstract result (fully in the sprit of \cite{wz}) assuring the existence of solutions to the wave profile equation whose nonlinearity meets nonstandard quasi-monotonicity condition {\bf NS} (see Theorem \ref{main1} below); secondly, we apply Theorem \ref{main1} to derive  a sharp criterion (see Theorem \ref{main2} below)  for the existence of monotone waves to the following neutral functional differential equation of the KPP-Fisher type
\begin{equation}\label{KFN}
\partial_t (u(t,x) - b u(t-\tau,x))= \partial_{xx} (u(t,x) - b u(t-\tau,x))  + u(t,x)(1-u(t-\tau,x)).
\end{equation}
It should be noted that for  functional differential reaction-diffusion equations of neutral type,  the wave propagation phenomena  are much less understood than in the case
of usual delayed or non-local reaction-diffusion equations. We are aware only of three recently published studies  \cite{HW,LW,YL} and  e-print \cite{HeT}.  Equation (\ref{KFN}) is
of the form  studied in  \cite{LW,YL}, however, the theory developed in the cited works cannot be applied to (\ref{KFN}) precisely  because of the lack of `standard' monotonicity  in the reaction term.
On the other hand, our extension of Wu and Zou  iteration technique can be efficiently applied to (\ref{KFN})   due to a non-standard quasi-monotonicity property  of the same reaction term.

In order to include  equations with unbounded spatio-temporal delays in the theory, instead of (\ref{BE}) it is convenient to work directly with  so-called wave profile system
\begin{equation}\label{PS}
D\phi''(t)  -c\phi'(t) + F(\phi)(t) =0, \quad \phi(-\infty)= 0, \ \phi(+\infty)= u_2,
\end{equation}
where $D = \mbox{diag}\{d_1,\dots,d_m\}, \ \phi \in \R^m_+,$ $c \in \R$ is some parameter (wave's speed) and $F: C_b(\R,\R^m)\to  C_b(\R,\R^m)$ is a nonlinear operator
defined on the space of all continuous bounded functions $\phi: \R \to \R^m$ equipped with the sup-norm. 
Assuming that $F$ is translation invariant (see  (c) in Theorem \ref{main1}),  by restricting $F$ on the constant functions $\phi(t,x) \equiv x$, we define  $\tilde F: \R^m\to \R^m$ as
$\tilde F(x) = F(\phi(t,x))$. Our basic assumption  consists in that $\tilde F$ has only two zero, $x=u_1 = 0$ and $x = u_2$, in the rectangle $\frak{R}=[0,u_2] \subset \R^m_+$.
For our purposes, it suffices to assume the following  weak continuity condition on $F$:
\begin{definition} We say that $F: C_b(\R,\R^m)\to  C_b(\R,\R^m)$ is a $p$-continuous map if for every bounded  sequence $\{\zeta_n\}$ of elements of
$C_b(\R,\R^m)$  pointwise converging to   $\zeta_* \in C_b(\R,\R^m)$, the sequence $\{F(\zeta_n)(t)\}$ converges to $F(\zeta_*)(t)$ at each $t\in \R$.

\end{definition}

As usual, we will need upper and lower  solutions of (\ref{PS}) as the initial approximations to the wavefront: importantly,  our definition of sub- and super-solutions does not differ from that given in \cite{wz,wzE}:
\begin{definition} \label{D1}  Let $t_1, t_2$ be some real numbers. We will call a super-solution of (\ref{PS}) each  continuous  and $C_b^2$-smooth on $\R\setminus\{t_1\}$,  componentwise positive  nondecreasing function  $\phi_+:\R \to \R^m_+,\  \phi_+(+\infty)= u_2,  \  \phi_+(-\infty)= 0,$  satisfying the differential inequalities
$$
D\phi_+''(t)  -c\phi'_+(t) + F(\phi_+)(t) \leq 0, \  t\not= t_1, \quad \phi'_+(t_1-) \geq  \phi'_+(t_1+).
$$
Similarly, a sub-solution of (\ref{PS})   is a non-negative  componentwise non-constant continuous function  $\phi_-:\R \to \R^m_+,\  \phi_-(-\infty)= 0, $ satisfying the inequalities
$$
D\phi_-''(t)  -c\phi'_-(t) + F(\phi_-)(t) \geq 0, \  t\not= t_2, \quad \phi'_-(t_2-) \leq  \phi'_-(t_2+).
$$
\end{definition}
%
%
Then we have the following simple general  existence result:\footnote{
A natural question is whether the uniqueness of wavefronts can be proved within the framework (or  under assumptions) of Theorem \ref{main1}. We believe that in general  this question has a negative answer. }
\begin{thm}\label{main1} Suppose that there exists  linear $p$-continuous operator $L: C_b(\R,\R^m)\to  C_b(\R,\R^m)$ such that the differential operator
$\mathcal{D}: C^2_b(\R,\R^m)\to  C_b(\R,\R^m)$ defined by
\begin{equation} \label{DO}
\mathcal{D}\phi(t) =    D\phi''(t)  -c\phi'(t) - (L\phi)(t), \ t \in \R,
\end{equation}
on the space $C^2_b(\R,\R^m)$ of twice continuously differentiable bounded, with bounded derivatives, functions, has the inverse integral operator
\begin{equation}\label{IO}
\mathcal{I}f(t) = \mathcal{D}^{-1}f(t) = \int_\R N(t-s)f(s)ds ,
\end{equation}
with non-positive continuous matrix-valued kernel $N:  \R \to M_m(\R_-)$. If, in addition,
 \begin{enumerate}
  \item[\bf(a)]  operator $F+L$ preserves  natural componentwise  order of
 $C_b(\R,\R^m)$,
 $$
 F(\psi) - F(\phi) +L(\psi-\phi) \geq 0, \ \mbox{for all} \ \psi \geq \phi, \ \psi, \phi \in C_b(\R,\R^m);
 $$
  \item[\bf(b)]  there is a pair of sub- and super-solutions $\phi_-(t) \leq \phi_+(t)$ for (\ref{PS});
  \item[\bf(c)]    $L$ and $F$ commute with the translation operator $(T_s\phi)(t):= \phi(t+s)$ for all  $s\in \R$,
\end{enumerate}
 then  (\ref{PS}) has a monotone solution $\phi_*(t)$ coinciding with   pointwise limit of the decreasing sequence of non-decreasing functions $\phi_{n+1} =
-\mathcal{I}[F(\phi_n)+L\phi_n], \ n =0,1, \dots; \ \phi_0:= \phi_+$.
\end{thm}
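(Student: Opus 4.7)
The plan is to recast (\ref{PS}) as a fixed point equation for the nonlinear operator
\[
T\phi := -\mathcal{I}\bigl[F(\phi) + L\phi\bigr],
\]
and then to run a monotone iteration starting at $\phi_0 = \phi_+$. Since $\mathcal{I} = \mathcal{D}^{-1}$ with non-positive kernel $N$, the linear map $-\mathcal{I}$ is order-preserving on $C_b(\R,\R^m)$: indeed, if $g_1\leq g_2$ componentwise, then $-\mathcal{I}(g_2-g_1)(t)=\int_\R (-N(t-s))(g_2(s)-g_1(s))\,ds\geq 0$. Combining this with hypothesis (a) gives that $T$ is order-preserving on the order interval $[\phi_-, \phi_+]\subset C_b(\R,\R^m)$. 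Furthermore, by hypothesis (c) the operators $F$, $L$, $\mathcal{I}$ all commute with translations $T_s$, so that whenever $\phi$ is componentwise nondecreasing (equivalently, $T_s\phi\geq \phi$ for $s\geq 0$), so is $T\phi$.

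Next I would verify the key inequalities $T\phi_+ \leq \phi_+$ and $T\phi_- \geq \phi_-$. Away from the corner point $t_1$, the super-solution inequality reads $\mathcal{D}\phi_+ \leq -(F(\phi_+)+L\phi_+)$, and applying the order-reversing map $\mathcal{I}$ we obtain $\phi_+ \geq -\mathcal{I}(F(\phi_+)+L\phi_+) = T\phi_+$. The corner contribution has to be handled distributionally: the jump condition $\phi_+'(t_1-)\geq \phi_+'(t_1+)$ makes $D\phi_+''$ receive a non-positive singular part at $t_1$, so the inequality $\mathcal{D}\phi_+ \leq -(F(\phi_+)+L\phi_+)$ remains valid in the distributional sense, and the representation formula (\ref{IO}) is preserved (this is the standard Wu--Zou device). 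An analogous argument gives $T\phi_- \geq \phi_-$. From these two one-step comparisons and the order-preserving nature of $T$, a straightforward induction yields
\[
\phi_- \leq T\phi_- \leq \dots \leq \phi_{n+1} \leq \phi_n \leq \dots \leq T\phi_+ \leq \phi_+,
\]
and by the translation argument each $\phi_n$ is componentwise nondecreasing in $t$. Thus the sequence admits a pointwise limit $\phi_*$, which is itself nondecreasing and satisfies $\phi_-\leq \phi_* \leq \phi_+$.

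To pass to the limit in $\phi_{n+1} = T\phi_n$, I would invoke the $p$-continuity of $L$ together with the analogous continuity property of $F$, deducing pointwise convergence $(F(\phi_n)+L\phi_n)(t)\to (F(\phi_*)+L\phi_*)(t)$; the uniform bound by $F(\phi_+)+L\phi_+$ and the integrability of $N$ let us apply dominated convergence inside (\ref{IO}) to get $\phi_* = T\phi_*$, i.e.\ $\mathcal{D}\phi_* + F(\phi_*)+L\phi_* = 0$, which is precisely the wave profile equation in (\ref{PS}). For the boundary conditions, $\phi_*(-\infty)=0$ is inherited from the sandwich $\phi_-\leq \phi_*\leq \phi_+$. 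At $+\infty$, monotonicity of $\phi_*$ forces a limit $\phi_*(+\infty)=:u_\infty\in[0,u_2]$; restricting (\ref{PS}) to constant functions and passing to the limit shows $\tilde F(u_\infty)=0$, and since $\phi_*\geq \phi_-$ with $\phi_-$ non-constant we must have $u_\infty\neq 0$, hence $u_\infty=u_2$ by the zero-structure assumption on $\tilde F$.

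The main obstacle is conceptual rather than technical, and it is the correct interpretation of the super/sub-solution inequalities across the corner points $t_1,t_2$: the representation $\phi_+ \geq T\phi_+$ only makes sense once the jumps $\phi_+'(t_1-)-\phi_+'(t_1+)\geq 0$ are absorbed as non-positive distributional masses into $D\phi_+''$, and similarly for $\phi_-$. The other delicate point is justifying the limit $u_\infty=u_2$, which crucially relies on the sub-solution $\phi_-$ being nontrivial and on the fact that $0$ and $u_2$ are the \emph{only} zeros of $\tilde F$ in $\frak{R}$; without one of these ingredients the iteration could conceivably collapse onto the trivial equilibrium.
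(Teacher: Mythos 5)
Your proposal follows essentially the same route as the paper's proof: the same fixed-point operator $A\phi=-\mathcal{I}[F(\phi)+L\phi]$, the same use of the sign of $N$ plus hypothesis (a) to get monotonicity of $A$, the same translation argument to propagate monotonicity in $t$, the same passage to the limit via dominated convergence and $p$-continuity, and the same identification of $\phi_*(+\infty)=u_2$ through the zero structure of $\tilde F$ and the non-triviality of $\phi_-$. The one substantive divergence is the verification of $A\phi_+\leq\phi_+$ and $\phi_-\leq A\phi_-$ across the corner points. You absorb the derivative jumps as signed distributional masses in $D\phi_\pm''$ and appeal to ``the standard Wu--Zou device''; the paper explicitly remarks that the arguments of the earlier literature are not adequate in this abstract setting --- here $N$ is known only as the kernel of $\mathcal{D}^{-1}$ on $C^2_b$, with no explicit formula to integrate by parts against and no stated extension of $\mathcal{I}$ to measures --- and instead proves the comparison by mollification (Lemma \ref{L7}): smoothing $\phi_\pm$ with a kernel $k_\delta$, so that the jump enters the differential inequality with the correct sign through the term $k_\delta(t-t_1)\left[\phi_\pm'(t_1+)-\phi_\pm'(t_1-)\right]$, applying $\mathcal{I}$ legitimately to the smooth functions, and letting $\delta\to0+$ using $p$-continuity. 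So you have correctly isolated the main obstacle, but your proposed resolution of it needs the mollification step (or an equivalent justification that $\mathcal{I}$ inverts $\mathcal{D}$ and is order-preserving on functions with a corner) to be complete. A minor further point: before invoking $p$-continuity of $F$ and $L$ at the limit you need $\phi_*\in C_b$, i.e.\ continuity of the pointwise decreasing limit; the paper extracts this from the representation $\phi_*(t)=-\int_\R N(t-s)\psi_*(s)\,ds$ (and then Dini's theorem for locally uniform convergence), a step your write-up omits.
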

\begin{proof} Since each  solution $\phi(t)$ of the boundary value problem (\ref{PS}) is bounded, the functions $F(\phi)(t), (L\phi)(t) $ also are bounded on $\R$. Therefore
bounded continuous function $\phi$ satisfies the equation $\phi = A(\phi), $ $A(\phi):= -\mathcal{I}[F(\phi)+L\phi]$  if and only if it satisfies (\ref{PS}).
Clearly, in view of assumption (a) and non-negativity of $N$, we have that $A(\phi)(t) \leq A(\psi)(t)$ for each ordered par of functions $\phi, \psi \in C_b(\R,\R^m)$, $\phi(t) \leq \psi(t), \ t \in \R$. Since $\phi(t) \leq \phi(t+s)$ for each non-decreasing function $\phi$ and $s >0,\  t \in \R$, the monotonicity of $A$ together with (c)  imply that $A$ transforms non-decreasing functions in non-decreasing ones. Moreover, by Lemma \ref{L7} proved in Appendix, $\phi_- \leq A\phi_- \leq A\phi_+ \leq \phi_+$. Hence, the sequence $\phi_n$ defined in the statement of Theorem \ref{main1} and the sequence $\psi_n = F(\phi_n)+L\phi_n$,
are decreasing and pointwise converge to non-negative monotone bounded limit functions $\phi_*, \psi_*$.  From the definition of $N$ we have that all components of the
matrix $N(s)$ are non-positive and integrable on $\R$, in fact
$L\int_\R N(s){\bf 1 }ds= -{\bf 1}$ where ${\bf 1} = (1,\dots,1)$. Since $\phi_{n+1}(t) = - \int_\R N(t-s)\psi_n(s)ds$, we can use the Lebesgue's dominated convergence theorem to conclude that $\phi_*(t) = - \int_\R N(t-s)\psi_*(s)ds$. This implies that the function $\phi_*$ is continuous and that $\phi_n(t)$ converges to $\phi_*(t)$ uniformly on bounded subsets of $\R$ (this is because of the Dini's monotone convergence theorem). Consequently, since $F$ and $L$ are $p$-continuous, we obtain that $\phi_* =
-\mathcal{I}[F(\phi_*)+L\phi_*]$, so that non-decreasing function $\phi_*$ solves the differential equation of (\ref{PS}) and satisfies $\phi_-(t) \leq \phi_*(t)\leq \phi_+(t), \ t \in \R$.  Thus
$\phi_*(-\infty)=0$ and the vector $\Phi:= \phi_*(+\infty)\leq u_2$ has positive components. Finally, observe that the sequence   of functions $\{\xi_n(t) := \phi_*(t+n)\}$, $n \in \N$, is non-decreasing and converging to a constant function $\phi_*(+\infty)$.  The  assumption (c) and $p$-continuity of $L$, $F$ implies that $F(\xi_n)(t)+(L\xi_n)(t) = F(\phi)(t+n)+(L\phi)(t+n)$ pointwise converges to  the constant function $F\Phi+L\Phi$. Thus,  after taking limit  in $\xi_n=-\mathcal{I}[F(\xi_n)+L\xi_n]$ as $n \to +\infty$, we may conclude that $\Phi=\phi_*(+\infty)$ is a constant solution of (\ref{PS}). This means that $\tilde F(\phi_*(+\infty))=0$ and, consequently, $\phi_*(+\infty) = u_2$. \hfill $\square$
\end{proof}

As an application,  we are going to apply Theorem \ref{main1} to prove  the existence of monotone traveling wave $u(t,x) = \phi(x+ct)$ of equation  (\ref{KFN}), i.e. solutions such that the wave profile $  \phi(t)$ is  monotone on $\R$, the difference $v(t):= \phi(t)-b\phi(t-c\tau)$ is $C^2$-smooth on $\R$  and satisfies the following neutral functional differential equation
\begin{equation}\label{PE}
v''(t) -cv'(t) +  \phi(t)(1 - \phi(t-c\tau)) =0, \ t \in \R, \ \ \phi(-\infty)=0,\  \phi(+\infty)=1.
\end{equation}
As a consequence of the above definition, $\phi(t)$ is strictly positive on $\R$. Moreover, $c\not=0$ since otherwise  (\ref{PE}) reduces to a simple second order differential   equation
which does not admit monotone  solutions satisfying boundary conditions of (\ref{PE}). Thus, without loss of generality, we can assume that $c >0$ and
$$
v'(t_k) =  c^{-1}\phi(t_k)(1 - \phi(t_k-c\tau))
$$
 at every critical point $t_k$ of $v'(t)$. Since $v(\pm\infty)$ are finite, this implies that $v'(t)$ is a bounded function; then, by (\ref{PE}), $v''(t)$ is also bounded and $v'(t)$ is uniformly continuous on the real line. By a well know argument (Barbalat's lemma),  we conclude that $v'(\pm\infty) =0$.  In this way, after integrating (\ref{PE}) on $\R$, we obtain
 $$
 c(1-b) = \int_\R\phi(s)(1 - \phi(s-c\tau))ds >0.
 $$
 Therefore $c(1-b)>0,\ c >0, \ b >0$ yield that $b \in (0,1)$: similarly to \cite{LW}, this condition on $b$ will be assumed  everywhere in this paper.

Now, the  following characteristic functions will play a key role in  the statement of our second theorem:
$$
\chi_0(z) = z^2 - cz +\frac{1}{1-be^{-zc\tau}},\quad  \chi_1(z) =  z^2 - cz -\frac{e^{-zc\tau}}{1-be^{-zc\tau}}.
$$
It is easy to see that these functions are analytic in the half-plane $\Re z > \ln b/(c\tau)$ where $|1-be^{-zc\tau}| > 1-be^{-\Re z c\tau}$. The latter estimation also shows that every
half-plane $\Re z > a $ with $a > \ln b/(c\tau)$ contains a finite number of zeros of $\chi_j(z)$.
\begin{lemma} \label{L4} Assume that $b \in (0,1),\ \tau \geq 0$. Then $({\bf A})$ there exists a strictly decreasing continuous function $c_*=c_*(\tau) >2, \tau \geq 0,$ such that
$
c_*(0)=2(1-b)^{-1/2}, \  c_*(+\infty) =2
$
and $\chi_0(z)$ has exactly two positive zeros $\lambda_2(c) \leq \lambda_1(c)$ (counting multiplicity) if and only if $c \geq c_*(\tau)$. In fact, these zeros coincide only if $c=c_*(\tau)$.  $({\bf B})$  Furthermore, if $\lambda_j$ is a complex zero of $\chi_0(z)$ with $c \geq c_*(\tau),$ then $\Re \lambda_j < \lambda_2(c)$.
\end{lemma}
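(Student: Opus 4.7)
The plan for (A) is to exploit the strict convexity of $\chi_0$ as a function of real $z$. Writing $G(z) = (1-be^{-zc\tau})^{-1}$, a direct calculation gives $G''(z) = b(c\tau)^2 G^2 e^{-zc\tau}(1 + 2bGe^{-zc\tau}) > 0$, so $\chi_0(z) = z^2 - cz + G(z)$ is strictly convex on its real domain. Combined with $\chi_0(0)=1/(1-b)>0$ and $\chi_0(+\infty)=+\infty$, this restricts the positive real zeros to $0$, $1$ (double root), or $2$ (simple roots). Setting $m(c,\tau) := \min_{z>0}\chi_0(z,c,\tau)$, the signs $\partial_c\chi_0 = -z(1 + b\tau G^2 e^{-zc\tau}) < 0$ and $\partial_\tau\chi_0 < 0$ for $z > 0$ force $m$ to be strictly decreasing in both $c$ and $\tau$. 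Since $m > 0$ for small $c$ and $m \to -\infty$ as $c \to \infty$, a unique $c_*(\tau)$ exists with $m = 0$, and the implicit function theorem yields continuity along with $c_*'(\tau) < 0$. The boundary values $c_*(0) = 2(1-b)^{-1/2}$ and $c_*(+\infty) = 2$ follow from the explicit quadratic at $\tau = 0$ and the locally uniform convergence $G(z) \to 1$ as $\tau \to \infty$; the strict inequality $c_*(\tau) > 2$ uses $G > 1$ on $z > 0$, which gives $\chi_0(z, 2, \tau) > (z-1)^2 \geq 0$.

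For (B), let $\lambda_j = \alpha + i\beta$ with $\beta \neq 0$ be a complex zero. The Neumann expansion $G(z) = \sum_{n \geq 0} b^n e^{-nzc\tau}$ (convergent on $\Re z > (\ln b)/(c\tau)$) yields
\[
G(\alpha) - \Re G(\lambda_j) = \sum_{n \geq 1} b^n e^{-n\alpha c\tau}\bigl(1 - \cos(n\beta c\tau)\bigr) \geq 0.
\]
Taking the real part of $\chi_0(\lambda_j) = 0$ then gives $\beta^2 = \alpha^2 - c\alpha + \Re G(\lambda_j) \leq \chi_0(\alpha)$, so $\chi_0(\alpha) \geq \beta^2 > 0$. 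By strict convexity and the real-zero structure, this forces $\alpha \notin [\lambda_2, \lambda_1]$, hence either $\alpha < \lambda_2$ or $\alpha > \lambda_1$.

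The essential difficulty, which I expect to be the main obstacle, is excluding the case $\alpha > \lambda_1$. The modulus bound $|\lambda_j||\lambda_j - c| = |G(\lambda_j)| \leq G(\alpha) \leq 1/(1-b)$ confines complex zeros to a bounded region of the right half-plane, and the configuration $\mu_- < \lambda_2 < \lambda_1 < \mu_+$ (where $\mu_\pm$ are the roots of the reference quadratic $z^2 - cz + 1$, which satisfy $\chi_0(\mu_\pm) = G(\mu_\pm) - 1 > 0$) suggests comparing $\chi_0$ with $z^2 - cz + 1$ on a large rectangle in $\{\Re z \geq \lambda_2\}$. A direct Rouch\'e estimate runs into trouble on the left boundary, where $|G - 1|$ and $|z^2 - cz + 1|$ become comparable; a more delicate counting argument -- perhaps a homotopy in $c$ starting from $c = c_*(\tau)$ (where the root structure is explicit as a coalesced double root) and tracking non-real zeros via continuous dependence, combined with the exponential decay of $|G - 1|$ as $\Re z \to +\infty$ -- will likely be needed to close the gap and rule out a migration of complex zeros across $\Re z = \lambda_2$ as $c$ varies.
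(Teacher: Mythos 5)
Your part $({\bf A})$ is correct and is in substance the paper's own argument: the paper passes to $\lambda=cz$, $\epsilon=c^{-2}$ and plays the concave parabola $-\epsilon\lambda^2+\lambda$ against the convex function $(1-be^{-\lambda\tau})^{-1}$, while you keep the original variable and use strict convexity of the whole function $\chi_0$; the computations of $G''>0$, $\partial_c\chi_0<0$, $\partial_\tau\chi_0<0$, the bound $\chi_0(z,2,\tau)>(z-1)^2$, and the two limiting values of $c_*$ all check out, so $({\bf A})$ is sound.

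Part $({\bf B})$, however, is not proved. Your real-part estimate $\beta^2=\alpha^2-c\alpha+\Re G(\lambda_j)\le\chi_0(\alpha)$ correctly yields $\chi_0(\Re\lambda_j)>0$ and hence $\Re\lambda_j\notin[\lambda_2,\lambda_1]$, but the lemma asserts $\Re\lambda_j<\lambda_2$, and the exclusion of $\Re\lambda_j>\lambda_1$ is exactly the half you leave open. This is a genuine gap, not a routine detail: as you yourself observe, the obvious Rouch\'e comparison of $\chi_0$ with $z^2-cz+1$ on a contour with left edge $\Re z=a$, $a\in(\lambda_1,\mu_+)$, fails precisely because on the real axis there one has $G(a)-1=\chi_0(a)-(a^2-ca+1)>|a^2-ca+1|$, so the dominance hypothesis breaks down on the boundary; and your proposed remedy (a homotopy in $c$ from the critical speed with a zero-counting argument) is only named, not carried out. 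Since $({\bf B})$ is what guarantees that $\lambda_2$ is the dominant exponent in the residue expansion of Lemma \ref{sms2} and that $\lambda_1-\lambda$ is the unique eigenvalue in $\Re z\ge0$ in the uniqueness proof, the statement cannot be accepted on the strength of your partial estimate. To be fair, the paper itself omits this step, invoking ``standard arguments from complex analysis'' and pointing to the appendix of \cite{SoT}, where the count of zeros in a right half-plane is carried out by the argument principle along a vertical line rather than by a naive Rouch\'e comparison; some version of that winding-number computation is what you would need to supply to close your proof.
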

\begin{proof} After introducing the change of variable $\lambda= cz, \ \epsilon = c^{-2}$ and using the convexity properties of functions $-\epsilon \lambda^2 + \lambda$ and
$(1-be^{-\lambda \tau})^{-1}$, the statement of   $({\bf A})$  is immediate (see also (\ref{D1}) in Appendix).  The proof of  $({\bf B})$ is based on  standard  arguments  from complex analysis (e.g. see \cite[Appendix]{SoT}) and  it is omitted. \hfill $\square$
\end{proof}

The next lemma can be seen as an extension of \cite[Lemma 3]{GT} for $b \in (0,1)$:
\begin{lemma} \label{L5} Assume that $b \in (0,1)$ is a fixed number, $c, \tau > 0$. Then there exist finite $\tau(b)>0$ and a strictly decreasing continuous function $c_\#=c_\#(\tau) >0,\ \tau >\tau(b),$ satisfying
$
c_\#(\tau(b)-)=+\infty, \  c_\#(+\infty) =0
$
and such that  $\chi_1(z)$ has exactly two negative zeros $\mu_2(c) \leq \mu_1(c)$ (counting multiplicity) if and only if
either $\tau \in (0,\tau(b)]$ or $\tau > \tau(b)$ and
$c \leq c_\#(\tau)$. These zeros coincide only if $c=c_\#(\tau)$.  Function $\tau(b)$ is strictly decreasing on $(0,1)$, with $\tau(0+) = 1/e$ and $\tau(1-)= 0$.
\end{lemma}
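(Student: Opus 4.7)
The plan is to follow the change-of-variables strategy indicated in Lemma \ref{L4}: set $\lambda = -cz$ and $\epsilon = c^{-2}$ so that negative real zeros of $\chi_1(z)$ correspond bijectively to positive solutions in $\lambda$ of
\[
p_\epsilon(\lambda) := \epsilon\lambda^2 + \lambda = q_\tau(\lambda) := \frac{e^{\lambda\tau}}{1 - be^{\lambda\tau}} = \frac{1}{e^{-\lambda\tau}-b},
\]
restricted to the interval $(0, \lambda^*(\tau))$, where $\lambda^*(\tau) := -\ln b/\tau > 0$ is the pole of $q_\tau$. Setting $s = e^{-\lambda\tau} \in (b,1)$, one verifies directly that $q_\tau$ is smooth, strictly increasing, and strictly convex on its domain, with $q_\tau(0) = 1/(1-b)$ and $q_\tau(\lambda^*-) = +\infty$, and moreover
\[
q_\tau''(\lambda) = \tau^2\,\frac{s(s+b)}{(s-b)^3}
\]
is itself strictly increasing in $\lambda$ (because $s(s+b)/(s-b)^3$ is strictly decreasing in $s$ on $(b, 1]$, as a short computation shows). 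Hence $\Delta(\lambda) := p_\epsilon(\lambda) - q_\tau(\lambda)$ has $\Delta''(\lambda) = 2\epsilon - q_\tau''(\lambda)$ strictly decreasing, so $\Delta$ is convex on some initial interval and concave thereafter. Combined with $\Delta(0) = -1/(1-b) < 0$ and $\Delta(\lambda^*-) = -\infty$, a short sign-change/Rolle argument shows that $\Delta$ has either exactly two or zero zeros in $(0, \lambda^*(\tau))$, counting multiplicity.

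Next I would introduce monotonicity in $\epsilon$ to define the critical value. Since $p_\epsilon(\lambda)$ is strictly increasing in $\epsilon$ for each fixed $\lambda > 0$, the quantity $M(\epsilon, \tau) := \sup_{\lambda \in (0, \lambda^*(\tau))} \Delta(\lambda)$ is continuous and strictly increasing in $\epsilon$, with $M \to +\infty$ as $\epsilon \to +\infty$. I would define $\tau(b)$ as the unique $\tau > 0$ for which $M(0, \tau) = 0$; existence and uniqueness follow because $\tau \mapsto M(0, \tau)$ is continuous and strictly decreasing, which in turn follows from the pointwise strict monotonicity of $q_\tau$ in $\tau$ (immediate from $q_\tau(\lambda) = 1/(e^{-\lambda\tau}-b)$). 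For $\tau \in (0, \tau(b)]$, $M(0, \tau) \geq 0$ gives $M(\epsilon, \tau) > 0$ for every $\epsilon > 0$, producing two simple roots for every $c > 0$. For $\tau > \tau(b)$, $M(0, \tau) < 0$, and there is a unique $\epsilon_\#(\tau) > 0$ with $M(\epsilon_\#(\tau), \tau) = 0$: this gives a double root at $\epsilon_\#(\tau)$ and two simple roots for $\epsilon > \epsilon_\#(\tau)$. Setting $c_\#(\tau) := 1/\sqrt{\epsilon_\#(\tau)}$ yields a strictly decreasing continuous function of $\tau > \tau(b)$, the strict monotonicity again following from the pointwise strict monotonicity of $q_\tau$ in $\tau$.

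The asymptotics follow from the explicit $\epsilon = 0$ tangency system $\lambda = q_\tau(\lambda)$ together with $1 = q_\tau'(\lambda)$, which defines $\tau(b)$. In the limit $b \to 0^+$ this system reduces to $\lambda = e^{\lambda\tau}$ and $1 = \tau e^{\lambda\tau}$, yielding $\lambda = e$, $\tau = 1/e$, so $\tau(0^+) = 1/e$. For $b \to 1^-$, any tangency point $\lambda$ must satisfy $\lambda = q_\tau(\lambda) \geq q_\tau(0) = 1/(1-b)$ and $\lambda < \lambda^*(\tau) = |\ln b|/\tau$, forcing $\tau < (1-b)|\ln b| \to 0$, so $\tau(1^-) = 0$. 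The limits of $c_\#$ follow similarly: $c_\#(\tau(b)^+) = +\infty$ because $\epsilon_\#(\tau) \to 0^+$ there, and $c_\#(+\infty) = 0$ because the shrinking domain of $q_\tau$ forces $\epsilon_\#(\tau) \to +\infty$ as $\tau \to +\infty$. The main technical obstacle I anticipate is the rigorous verification of the at-most-two-zeros claim (which rests on the monotonicity of $q_\tau''$ proved above) together with the strict monotonicity and boundary asymptotics of $c_\#(\tau)$, both of which ultimately reduce to implicit-function arguments on the smooth tangency system and quantitative control of $q_\tau$ near its pole.
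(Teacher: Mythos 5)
Your proposal is correct and follows essentially the same route as the paper's Appendix proof (Lemma~\ref{LL}): the rescaling $\epsilon=c^{-2}$ turning $\chi_1(z)=0$ into $\epsilon\lambda^2+\lambda=1/(e^{-\lambda\tau}-b)$, a convexity argument bounding the number of negative roots by two, monotone dependence on $\epsilon$ to define $\epsilon_\#$ and hence $c_\#=\epsilon_\#^{-1/2}$, and the $\epsilon=0$ tangency system to characterize $\tau(b)$. Two remarks. First, for the "at most two negative zeros" step the paper argues via $\chi_1'''\neq 0$ together with the fact that $\chi_1$ has exactly one (simple) non-negative real zero, whereas you use the strict monotonicity of $q_\tau''$; both work, and your computation of $q_\tau''=\tau^2 s(s+b)/(s-b)^3$ and its monotonicity is correct. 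Second, and this is the one genuine omission: the lemma asserts that $\tau(b)$ is \emph{strictly decreasing} in $b$, and your write-up only establishes the endpoint limits $\tau(0+)=1/e$ and $\tau(1-)=0$. The paper gets monotonicity for free from an explicit parametrization of the tangency curve, $\tau=\sigma^2 e^{-\sigma}$, $b=(1-\sigma)e^{-\sigma}$, $\sigma\in(0,1)$, which yields $\tau'(b)=-\sigma<0$ (and the endpoint values as $\sigma\to 1^-$ and $\sigma\to 0^+$). In your framework the same conclusion follows from the comparison argument you already use for $\tau$: since $q_\tau(\lambda)=1/(e^{-\lambda\tau}-b)$ is pointwise strictly increasing in $b$ and its domain $(0,|\ln b|/\tau)$ shrinks as $b$ grows, $M(0,\tau)$ is strictly decreasing in $b$ near the threshold, so the zero set $\{M(0,\tau)=0\}$ defines $\tau(b)$ as a strictly decreasing function. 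You should add that sentence; otherwise the argument is complete.
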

\begin{proof}  See Lemma \ref{LL} in Appendix.   \hfill $\square$
\end{proof}

Lemmas \ref{L4} and \ref{L5} show that the following closed subset of $\R_+^2$: $$\frak{D} = \{(\tau, c)\in \R_+^2:\ \chi_0(z) \ \mbox{has positive zeros and }   \chi_1(z) \ \mbox{has negative zeros}\}$$
is non-empty (actually, it is unbounded and simply connected, since the curves $c=c_*(\tau), \ c=c_\#(\tau)$ intersect transversally exactly at one point, see Lemma \ref{IT} in Appendix). Here is our second theorem:
\begin{thm}\label{main2} Suppose that $b\in [0,1), \tau >0$. Then equation (\ref{KFN}) has a monotone traveling front $u(t,x) = \phi(x+ct)$ if and only if $(\tau,c) \in \frak D$.
Moreover, profile $\phi$ of this solution can be approximated by successive iterations as described in Theorem \ref{main1}.
\end{thm}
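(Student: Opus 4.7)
The plan is to reduce (\ref{PE}) to a non-neutral scalar problem accessible to Theorem \ref{main1}. Since $b \in (0,1)$, the operator $I - bT_{-c\tau}$ is invertible on $C_b(\R)$ with inverse $\sum_{k\geq 0} b^k T_{-kc\tau}$. Setting $v(t) := \phi(t) - b\phi(t-c\tau)$, equation (\ref{PE}) becomes
\begin{equation*}
v''(t) - c v'(t) + G(v)(t) = 0, \quad v(-\infty)=0, \ v(+\infty)=1-b,
\end{equation*}
with $G(v)(t):=\phi(t)(1-\phi(t-c\tau))$ and the rectangle $v \in [0,1-b]$ (equivalently $\phi \in [0,1]$). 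A monotone solution $v_*$ of this problem yields a monotone profile $\phi_* = (I-bT_{-c\tau})^{-1}v_*$ of (\ref{PE}), and the iteration of Theorem \ref{main1} for $v_n$ pulls back to the iteration for $\phi_n$ claimed in the theorem statement.

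The linear operator to use is
\begin{equation*}
Lv(t) := \phi(t-c\tau) = \sum_{k \geq 0} b^k v(t-(k+1)c\tau),
\end{equation*}
which is bounded, translation-invariant, and $p$-continuous on $C_b(\R)$. Hypothesis (a) of Theorem \ref{main1} follows from the identity
\begin{equation*}
G(v_2)-G(v_1)+L(v_2-v_1) = (\phi_2-\phi_1)(t)\bigl(1-\phi_2(t-c\tau)\bigr) + \bigl(1-\phi_1(t)\bigr)L(v_2-v_1)(t),
\end{equation*}
valid for $0\leq v_1\leq v_2\leq 1-b$, both terms being non-negative since $\phi_i\in[0,1]$. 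The symbol of $\mathcal{D}=\partial_t^2-c\partial_t-L$ is precisely $\chi_1(z)$; by Lemma \ref{L5} and $(\tau,c)\in\frak D$, $\chi_1$ has two negative real zeros $\mu_2\leq\mu_1<0$, and a residue/contour-shift analysis on a vertical line between $\mu_1$ and the origin yields a non-positive Green's kernel $N$ for $\mathcal{D}$ as required.

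For the initial approximations I use the positive zeros $\lambda_2\leq\lambda_1$ of $\chi_0$ and the larger negative zero $\mu_1$ of $\chi_1$, both guaranteed by $(\tau,c)\in\frak D$. The super-solution $v_+$ is built by gluing the exponential $A_+e^{\lambda_2 t}$ (with $A_+=1-be^{-\lambda_2 c\tau}$, matching the linearization at $v=0$) on the left to $(1-b)-B_+e^{\mu_1 t}$ (matching the linearization at $v=1-b$) on the right at their unique transition point, with $B_+>0$ chosen to ensure monotonicity and the correct sign of the derivative jump. A direct computation using $\chi_0(\lambda_2)=0$ and $\chi_1(\mu_1)=0$ then shows that $v_+''(t) - cv_+'(t) + G(v_+)(t)$ equals $-e^{\lambda_2(2t-c\tau)}$ on the left branch and $-D^2 e^{\mu_1(2t-c\tau)}$ (with $D = B_+/(1-be^{-\mu_1 c\tau})$) on the right, both manifestly non-positive. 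The sub-solution is constructed analogously as $v_-(t):=\max\{A_- e^{\lambda_2 t}(1-Me^{\epsilon t}),0\}$ with $\epsilon>0$ so small that $\chi_0(\lambda_2+\epsilon)<0$ and $M>0$ large enough to absorb the quadratic correction, following \cite{GT}; the critical case $c=c_*(\tau)$ uses a polynomial-exponential modification. Theorem \ref{main1} then delivers the desired monotone $v_*$.

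For necessity, if a monotone front $\phi$ of (\ref{KFN}) exists, a Laplace-transform/Ikehara-Tauberian argument (as in \cite{GT}) forces the decay exponent of $\phi$ at $-\infty$ to be a positive real zero of $\chi_0$ (giving $c\geq c_*(\tau)$) and the decay exponent of $1-\phi$ at $+\infty$ to be a negative real zero of $\chi_1$ (giving $\tau\leq\tau(b)$ or $c\leq c_\#(\tau)$), hence $(\tau,c)\in\frak D$. The main technical obstacle is simultaneously securing (i) the non-positivity of the Green's kernel $N$ of the meromorphic operator $\mathcal{D}$, via careful contour analysis around the poles of $(1-be^{-zc\tau})^{-1}$, and (ii) the validity of the glued super-solution, in particular the sign of the derivative-jump condition, which is precisely why the condition on $\chi_1$ enters the existence criterion rather than appearing merely as an artifact of the iteration scheme.
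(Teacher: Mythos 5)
Your overall architecture coincides with the paper's: the same reduction $v=(I-bS)\phi$ (the paper merely normalizes by $1/(1-b)$ so that $w(+\infty)=1$), the same choice $L=SB$ whose symbol makes $\mathcal D$ the operator with characteristic function $\chi_1$, the same quasi-monotonicity identity verifying hypothesis (a), the same glued super-solution built from $e^{\lambda_2 t}$ and $1-e^{\mu_1 t}$, and the same sub-solution ansatz from \cite{GT}. Three steps, however, contain genuine gaps.

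First, on the right branch of the super-solution your claim that $v_+''-cv_+'+G(v_+)$ \emph{equals} $-D^2e^{\mu_1(2t-c\tau)}$ is false: for $t\geq\zeta$ the function $\phi_+=(I-bS)^{-1}v_+=\sum_{k\geq 0} b^k v_+(t-kc\tau)$ samples the left branch for all large $k$, so $G(v_+)$ contains cross terms mixing the $e^{\lambda_2}$ and $e^{\mu_1}$ pieces. The desired sign survives, but only thanks to the pointwise comparison $1-e^{\mu_1 t}<ae^{\lambda_2 t}$ for $t<\zeta$ (the paper's (\ref{aux})), which is exactly what Lemma \ref{US} uses to control those cross terms; your ``direct computation'' needs this estimate. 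Second, the critical speed $c=c_*(\tau)$ cannot be dispatched by ``a polynomial-exponential modification'': your sub-solution requires $\chi_0(\lambda_2+\epsilon)<0$ for small $\epsilon>0$, which holds only when $\lambda_2<\lambda_1$, i.e.\ $c>c_*(\tau)$; at $c=c_*(\tau)$ the positive zero is double and $\chi_0\geq0$ on a real neighborhood of it, so the sign you need is unavailable and the standard $te^{\lambda t}$ fix is nontrivial for this nonlocal neutral term. The paper avoids the issue entirely by approximating $(\tau_0,c_*(\tau_0))$ with non-critical points of $\frak{D}$ (possible by Lemma \ref{IT}) and passing to the limit using uniform derivative bounds and Helly's theorem; you should either do the same or actually exhibit and verify a critical sub-solution. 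Third, your necessity argument for the condition on $\chi_1$ --- that an Ikehara-type analysis at $+\infty$ forces the decay rate of $1-\phi$ to be a negative real zero of $\chi_1$ --- is only asserted, and it is the hard half: sharp asymptotics of the neutral term at $+\infty$ are precisely what the authors say they wish to avoid. The paper instead uses the elementary Fang--Zhao comparison trick (Lemma \ref{nk}): assuming $\chi_1$ has no negative zeros, one derives $y(t)=1-\phi(t)\geq Ce^{-\gamma t}$ and bootstraps the integral inequality to contradict the definition of $\gamma_*=-\liminf_{t\to+\infty} t^{-1}\ln y(t)$. If you insist on the Ikehara route you must first establish genuine exponential (and not super-exponential, and not oscillatory) decay of $1-\phi$ at $+\infty$, which your sketch does not do.
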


It is interesting to note that equation (\ref{KFN})  with $b=0$ coincides with the usual KPP-Fisher delayed equation, cf.  \cite{BS,BNPR,ADN,FZ,GT,LMS,KO,SoT,wz}.  There are at least four distinct demonstrations  of the wavefront existence criterion  for this equation (see \cite{FZ,GT,LMS,KO}): our proof here differs from the previously known ones. Now, the form of dependence of $\frak{D}$ on $b\geq 0$ shows that  the neutral correction $-bu(t-c\tau)$ in (\ref{PE})  increases the minimal speed of propagation and has a negative impact on the monotonicity properties of its wave solutions.

An important additional question is whether traveling waves to the neutral diffusive equations have the same kind of uniqueness and stability properties as the  delayed diffusive equations.
For general neutral systems of  \cite{LW}  which satisfy standard quasi-monotonicity condition, the uniqueness of all {\it non-critical} waves was proved in \cite{YL}.  Since equation (\ref{PE}) is not quasi-monotone in the usual sense,  the theory of \cite{YL} does not apply to it. After rewriting equation (\ref{PE}) in terms of function $v(t)$, we obtain an equivalent  non-neutral equation (\ref{PEW}):  this suggests that the approach to the uniqueness problem developed in \cite{FZ,GT} can be useful in our situation. This approach, however, can be applied only to monotone waves and it also needs exact a priori estimates of wave   asymptotics   at $+\infty$ (which seems to be  more difficult to justify than similar estimates at $-\infty$).  Therefore, in this work,  we are invoking a new idea recently proposed in \cite{SoT} for functional diffusive equations with finite delays.
The next theorem and its proof show that the mentioned idea in   \cite{SoT}  is also significant  for some equations with infinite delays.
\begin{thm}\label{main3} Suppose that $b\in [0,1)$.  Then for every fixed pair  $(\tau,c) \in \frak D$  equation (\ref{KFN}) has a unique (up to translation) monotone traveling front $u(t,x) = \phi(x+ct)$.
\end{thm}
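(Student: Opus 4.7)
The plan is to adapt the sliding/touching method recently introduced by Solar et al.~\cite{SoT}, whose decisive advantage over the route of \cite{FZ,GT} is that it requires exact asymptotic information for wavefronts only at $-\infty$, not at $+\infty$.

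The first step is to establish the precise exponential asymptotics at $-\infty$ of every monotone front: namely
\[
\phi(t) = C e^{\lambda t}(1 + o(1)), \quad t \to -\infty,
\]
for some $C>0$ and some $\lambda \in \{\lambda_2(c), \lambda_1(c)\}$, the positive roots of $\chi_0$ furnished by Lemma \ref{L4}. To this end I would first rewrite (\ref{PE}) in non-neutral form by inverting $I - bT_{-c\tau}$: since $b \in [0,1)$, one has $(I - bT_{-c\tau})^{-1} = \sum_{k \geq 0} b^k T_{-kc\tau}$ on $C_b(\R)$ with geometric convergence, and this transforms (\ref{PE}) into a scalar equation with infinite but exponentially damped memory. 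Applying the convolution Green's operator $\mathcal{I}$ of Theorem \ref{main1} then produces a linear integral equation to which Ikehara's Tauberian theorem applies, while Lemma \ref{L4}(B) rules out competing oscillatory leading terms of the same order.

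With the asymptotics in hand, uniqueness follows by a sliding argument. Let $\phi_1, \phi_2$ be two monotone fronts of speed $c$. Using Step 1, translate $\phi_1$ so that its leading coefficient at $-\infty$ matches that of $\phi_2$, and set
\[
s_* := \inf \{ s \in \R : \phi_1(\cdot + s) \geq \phi_2(\cdot) \text{ on } \R \}.
\]
The asymptotic expansion ensures $s_*$ is a well-defined real number, and by its minimality the nonnegative function $\psi := \phi_1(\cdot + s_*) - \phi_2$ must vanish either at some finite point or only asymptotically at $-\infty$. In the first case, subtracting the integral equations $\phi_j = -\mathcal{I}[F(\phi_j) + L\phi_j]$ and invoking the order-preservation of $F + L$ from Theorem \ref{main1}(a) together with the strict negativity of the kernel $N$ yields a strong maximum principle that forces $\psi \equiv 0$. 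In the second case, a sub-leading Ikehara expansion contradicts the infimum property of $s_*$.

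The principal obstacle will be the Ikehara step: extending Tauberian asymptotics to an integral equation whose memory term is an infinite rather than finite-range convolution, inherited from inverting $I - bT_{-c\tau}$. Concretely, one must verify that the appropriate one-sided Laplace transform of $\phi$ extends meromorphically past $\Re z = \lambda$ with a simple pole there. The geometric summability $\sum_k b^k < \infty$ together with the exponential decay of $\phi$ at $-\infty$ should suffice, but care is needed at the boundary of $\frak{D}$ where $\lambda_1(c) = \lambda_2(c)$ and the pole becomes double, so that the Tauberian recipe and the normalisation of the sliding step must be adjusted accordingly.
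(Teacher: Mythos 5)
There is a genuine gap, and it originates in a mislabelling of the method you cite. The technique of Solar--Trofimchuk \cite{SoT} that the paper invokes is \emph{not} a sliding/touching argument: its whole point is to avoid sliding precisely because sliding needs two-sided asymptotics. Your Step 2 is the classical sliding scheme of \cite{FZ,GT}, and for it the claim ``the asymptotic expansion ensures $s_*$ is a well-defined real number'' is unjustified: to know that $\phi_1(\cdot+s)\geq\phi_2(\cdot)$ on all of $\R$ for large $s$, the expansion at $-\infty$ is not enough, since both profiles tend to $1$ at $+\infty$ and the ordering there is governed by the rates $1-\phi_j(t)\sim C_je^{\mu t}$, i.e.\ by the negative zeros of $\chi_1$. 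You never establish these $+\infty$ asymptotics, and the paper explicitly flags them as the hard part (``it also needs exact a priori estimates of wave asymptotics at $+\infty$'') and as the reason for abandoning the sliding route. For the same reason your dichotomy at $s=s_*$ is incomplete: besides touching at a finite point or degenerating at $-\infty$, the contact can escape to $+\infty$, a case your argument does not address. The strong-maximum-principle step for a finite touching point is essentially sound (strict negativity of $N$ forces $(F+L)\phi_1(\cdot+s_*)\equiv(F+L)\phi_2$, whence equality), but it is reached only after the unproved well-posedness of $s_*$.

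For contrast, the paper's actual argument matches the leading terms of the two profiles at $-\infty$ (Lemma \ref{sms2}, obtained by a residue/Laplace argument rather than Ikehara), and then studies the weighted difference $y(t)=(\psi(t)-\phi(t))e^{-\lambda t}$ with $\lambda$ between the positive zeros of $\chi_0$. The weight makes $y(+\infty)=0$ automatic, which is exactly how the $+\infty$ difficulty is bypassed. Then either the leading coefficient of $y$ at $-\infty$ vanishes, in which case $y$ is super-exponentially small and Lemma \ref{sms3} (a fading-memory small-solutions lemma, the genuinely new ingredient) forces $y\equiv0$; or $y$ has a positive interior maximum, where $\chi_0(\lambda)<0$ yields a contradiction, with a separate integration argument in the critical case $\lambda_1=\lambda_2$. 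If you wish to keep your route, you must first prove the $+\infty$ asymptotics (via $\chi_1$ and the infinite-memory operator $B$), which is a substantial additional piece of work; otherwise you should switch to the weighted-difference scheme.
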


  Observe that,  in difference with \cite{YL}, Theorem \ref{main3} includes critical waves. On other hand, it establishes uniqueness property within the class of monotone waves only (as in \cite{FZ,GT,YL,TPTJDE}).  The analysis  of \cite{SoT} suggests that  Theorem \ref{main3}  can be improved to include more general {\it semi-wavefronts}.  We do not pursue this goal in the present work since it  requires
more exhaustive analysis of asymptotic behavior of waves at $-\infty$.

Finally, a few words about the organization of the paper. We prove  Theorem \ref{main2} in Section 2.   The proof of necessity of condition $(\tau,c) \in \frak D$ in this theorem demands asymptotic analysis of wave profiles at $-\infty$. This analysis is realized in Lemma \ref{sms2} which is then  used in  Section 3 where  Theorem \ref{main3} is proved.  Section 3 also contains a general result (Lemma \ref{sms3}) about non-existence
of  super-exponentially decaying (at $-\infty$) solutions to some asymptotically autonomous differential equations with infinite delay.
Finally, Appendix contains proofs of several technical assertions used along the paper.
\section{Proof of Theorem \ref{main2}}

\subsection{Sufficiency}

   Consider   bounded linear operator $Sg(t):=g(t-c\tau)$ acting in the space  $C_b(\R,\R^m)$ provided with the sup-norm. Then we have that  $|S|=1$  and the relation
 $w(t):= (\phi(t)-b\phi(t-c\tau))/(1-b)$ can be written as $(1-b)w = (I-bS)\phi$ where $I$ is the identity operator.  Since $b\in (0,1)$, we obtain that
 $$\phi = (1-b)Bw:= (1-b)(I-bS)^{-1}w = (1-b)\sum_{j=0}^{+\infty}b^jS^jv \in C_b(\R,\R^m). $$
 Thus, after multiplying equation (\ref{PE}) by $1/(1-b)$, we get the following equivalent problem
 \begin{equation}\label{PEW}
w''(t) -cw'(t) + (Bw)(t)(1 - (1-b)(SBw)(t)) =0, \ t \in \R, \ \ w(-\infty)=0, w(+\infty)=1.
\end{equation}
Consider operators $L= SB, \ F: C_b(\R,\R^m)\to  C_b(\R,\R^m)$ where
$$
F(\phi)(t) :=  (B\phi)(t) - (1-b)(B\phi)(t)\cdot (SB)\phi(t).
$$
Note that the operators  $S, B = (1-bS)^{-1}, L = SB $  and $ F$ are clearly $p$-continuous and commute with the translation operator $T_s, \ s \in \R$.

Then, since the operators $S, B$ preserves the natural order of $C_b(\R,\R^m)$, for all $\phi, \psi \in C_b(\R,\R^m)$ satisfying $0 \leq \phi(t) \leq \psi(t) \leq 1, \ t \in \R$, it holds that
  \begin{eqnarray}
{\lefteqn{(L(\psi-\phi))(t) + F(\psi)(t) - F(\phi)(t) }}\h\h \n  \\
&=&  \left((B\psi)(t) - (B\phi)(t)\right)(1-  (1-b)(SB\psi)(t)) + \n\\
&&    +\left[(SB\psi)(t)- (SB\phi)(t)\right](1- (1-b)(B\phi)(t))\geq 0, \ t \in \R.\n
\end{eqnarray}
Notice that  $(1-b)(SB\psi)(t),  (1-b)(B\phi)(t) \in [0,1] $ for all $t \in \R$.

\vspace{2mm}

To apply Theorem \ref{main1}, we will need the next auxiliary results, Lemmas \ref{LV}, \ref{US} and \ref{LS}.

\begin{lemma}\label{LV}  Assume that $(\tau,c)\in \frak{D}$. Then the differential operator
$\mathcal{D}: C^2_b(\R,\R)\to  C_b(\R,\R)$ defined by formula (\ref{DO}) (where $D=1$ should be taken)
has the inverse integral operator in the form (\ref{IO}) with negative continuous matrix-valued kernel $N:  \R \to (-\infty,0)$.
\end{lemma}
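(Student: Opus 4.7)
The plan is to realize $\mathcal{I} = \mathcal{D}^{-1}$ as convolution by a scalar kernel $N$, obtained via Fourier inversion of the symbol of $\mathcal{D}$, and then to verify $N(t) < 0$ pointwise.

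First, I would identify the symbol. Using $Be^{zt} = e^{zt}/(1-be^{-zc\tau})$ on the half-plane $\operatorname{Re} z > \ln b/(c\tau)$, a direct calculation gives $Le^{zt} = (SB)e^{zt} = e^{zt}e^{-zc\tau}/(1-be^{-zc\tau})$ and hence $\mathcal{D}e^{zt} = \chi_1(z)e^{zt}$, so $\chi_1$ is precisely the characteristic function of $\mathcal{D}$. By hypothesis $(\tau,c) \in \frak{D}$ and Lemma \ref{L5}, $\chi_1$ has two negative real zeros $\mu_2 \leq \mu_1 < 0$, while $\chi_1(0) = -(1-b)^{-1} < 0$ and $\chi_1(z)\to +\infty$ as $z \to +\infty$ provide at least one positive real zero.

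Second, I would verify that $\chi_1(i\xi) \neq 0$ for every $\xi \in \R$. Separating real and imaginary parts using $|1-be^{-i\xi c\tau}|^2 = 1-2b\cos(\xi c\tau)+b^2$ and invoking $\sin^2+\cos^2=1$ reduces the vanishing condition to a transcendental identity, which I would rule out by a Rouch\'e/homotopy deformation from $b=0$ (the classical delayed KPP--Fisher symbol, treated in \cite{GT,KO}) through the open connected region $\frak{D}$. Once this is established, and since $\chi_1(i\xi) \sim -\xi^2$ at infinity, the formula
\[
N(t) := \frac{1}{2\pi}\int_{\R}\frac{e^{it\xi}}{\chi_1(i\xi)}\,d\xi
\]
defines a continuous, integrable, real-valued function, and a Fourier-side computation yields $\mathcal{D}(N\ast f) = f$ on $C_b^2(\R,\R)$, identifying $\mathcal{I}$ with convolution by $N$ as in (\ref{IO}).

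The main obstacle is proving $N(t) < 0$ throughout $\R$. My intended approach combines two ingredients. First, a residue representation
\[
N(t) = \sum_{\mu:\,\chi_1(\mu)=0,\ \operatorname{Re}\mu<0}\frac{e^{\mu t}}{\chi_1'(\mu)} \quad (t>0),
\]
and its analogue for $t<0$, shows that the dominant modes $e^{\mu_1 t}$ and $e^{\nu_1 t}$ (with $\nu_1$ the smallest positive real root) enter with strictly negative coefficients, since $\chi_1'(\mu_1) < 0$ and $\chi_1'(\nu_1) > 0$ by the sign-change pattern of $\chi_1$ on the real axis --- this handles the asymptotic regions $|t|\gg 1$. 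Second, to control $t$ near $0$ and subdominant contributions, I would compare $\mathcal{D}$ with the constant-coefficient operator $\mathcal{D}_k\phi := \phi''-c\phi'-k\phi$ whose Green's function
\[
N_0(t) = -(\lambda_+-\lambda_-)^{-1}e^{\lambda_\mp t}, \quad t \gtrless 0, \quad \lambda_\pm = (c\pm\sqrt{c^2+4k})/2,
\]
is explicit and strictly negative; writing $\mathcal{D} = \mathcal{D}_k-(L-k)$ and iterating $\phi = \mathcal{D}_k^{-1}[(L-k)\phi+f]$ yields the formal Neumann expansion $\mathcal{D}^{-1} = \sum_{n\geq 0}[\mathcal{D}_k^{-1}(L-k)]^n\mathcal{D}_k^{-1}$, whose convergence and sign structure reduce to a pointwise balance between $k|N_0(t)|$ and $\sum_{j\geq 1} b^{j-1}|N_0(t-jc\tau)|$. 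Establishing this balance (which can fail pointwise for a single choice of $k$ and must therefore be combined with the asymptotic information above and the defining relation $\mathcal{D}N = \delta_0$) is the technical heart of the proof, and the step where I anticipate the main difficulty.
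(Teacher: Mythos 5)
Your identification of $\chi_1$ as the symbol of $\mathcal{D}$ is correct (indeed $\mathcal{D}e^{zt}=\chi_1(z)e^{zt}$ for $\Re z>\ln b/(c\tau)$), and the Fourier/residue framework is a legitimate way to build the candidate kernel. It is, however, a genuinely different route from the paper's: the paper rewrites $SB$ as convolution with the atomic measure $K=\sum_{j\ge0}b^j\delta(\cdot-jc\tau)$ and then simply invokes Lemma 19 of \cite{TPTJDE}, whose hypotheses are exactly what Lemma \ref{L5} supplies (the existence of the negative zeros $\mu_2\le\mu_1$ of $\chi_1$); that cited lemma already contains both the construction of $N$ and its strict negativity on all of $\R$. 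In other words, the paper outsources precisely the part you attempt to redo from scratch.

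And that part is where your proposal has a genuine gap, one you flag yourself: the pointwise inequality $N(t)<0$ for \emph{all} $t$ is never established. The residue expansion controls only $|t|\gg1$, and even there it is incomplete, because you must show that the leading real zero in each half-plane strictly dominates in real part every complex zero of $\chi_1$ there (the analogue for $\chi_1$ of part (B) of Lemma \ref{L4}); without this, oscillatory contributions from complex zeros could change the sign of $N$ arbitrarily far out, and the critical case $\mu_1=\mu_2$ needs separate (double-pole) treatment. For bounded $t$ the Neumann-series comparison with $\mathcal{D}_k$ is only a programme: as you admit, the required pointwise balance ``can fail for a single choice of $k$,'' and no mechanism is offered for gluing the two regimes. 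Two further unresolved points: (i) the nonvanishing of $\chi_1$ on $i\R$ rests on an unexecuted homotopy in $b$ --- note $\Re\bigl[e^{-i\theta}/(1-be^{-i\theta})\bigr]=(\cos\theta-b)/|1-be^{-i\theta}|^2$ changes sign, so the real part of $\chi_1(i\xi)$ alone does not settle it; (ii) $\chi_1$ is analytic only for $\Re z>\ln b/(c\tau)$, and its meromorphic continuation has a vertical line of poles on $\Re z=\ln b/(c\tau)$, so closing the contour to the left for $t>0$ requires identifying which zeros of the continued function are actually collected and bounding the contour contribution. As written, the proposal sets up the correct objects but does not prove the lemma: the negativity of $N$, which is the entire content of the statement, remains open.
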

\begin{proof} Observe that, using the Dirac $\delta$-function $\delta(t)$, we can represent $(SB)\phi(t)$ as
$$
(SB)\phi(t) = \sum_{j=0}^{+\infty}b^j\phi(t-(j+1)c\tau) = \int_\R K(t-s)\phi(s-c\tau)ds,  \quad K(t):=  \sum_{j=0}^{+\infty}b^j\delta(t-j\tau) .
$$
Then we observe that the theory developed in \cite{TPTJDE} for the Lebesgue integrable kernel $K(s)$ applies literally to the case of the generalized kernels.
In particular,  in view of Lemma \ref{L5} above, \cite[Lemma 19]{TPTJDE} holds with $\xi_*\geq 1$. Observe that we can admit  $d=0$ in \cite[Lemma 19]{TPTJDE}
because of the following property of the above given kernel $K(s)$: if $v \in C_b(\R,\R)$ satisfies $v(t) <0$ for all $t \leq a_c$ then  $K * v(a_c-c\tau) = \sum_{j=0}^{+\infty}b^jv(a_c -{(j+1)}\tau) <0$, cf. \cite[Claim IV, p. 1221]{TPTJDE}. \hfill $\square$
\end{proof}

For $(c,\tau) \in \frak{D}$ with $c> c_*(\tau)$,  we will consider the following function
$$ \phi_+(t) = \left\{
\begin{array}{ccc}
1- e^{\mu_1 t} &,& t \geq \zeta \\
a e^{\lambda_2 t}&,& t \leq \zeta
\end{array}, \right. $$
where positive $a$ and $\zeta \in \R$ are chosen to assure the continuity of the derivative $\phi_+'(t)$ on $\R$. Due to the opposite convexities of the pieces of $\phi_+$,
the existence of such $a, \zeta$ is immediate; furthermore, by the same reason,
\begin{equation}\label{aux}
1- e^{\mu_1 t} < a e^{\lambda_2 t}, \quad t < \zeta.
\end{equation}
This choice of function  $\phi_+$ was suggested by the studies in \cite{FZ,GT}.
\begin{lemma}\label{US}  For $(c,\tau) \in \frak{D}$ with $c> c_*(\tau)$, $\phi_+(t)$ is a super-solution for  (\ref{PEW}).
\end{lemma}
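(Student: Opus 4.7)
The plan is to verify, branch by branch on $t\leq\zeta$ and $t\geq\zeta$, the differential inequality in Definition \ref{D1}; the bookkeeping requirements (monotonicity from $\lambda_2>0>\mu_1$ and $a>0$, positivity, limits $\phi_+(-\infty)=0$ and $\phi_+(+\infty)=1$, $C^2_b$-smoothness on $\R\setminus\{\zeta\}$, and $\phi_+'(\zeta-)=\phi_+'(\zeta+)$) follow immediately from the explicit definition of $\phi_+$ together with the choice of $a,\zeta$ assuring continuity of $\phi_+'$. Thus the entire proof reduces to checking
$$
\phi_+''(t)-c\phi_+'(t)+(B\phi_+)(t)\bigl(1-(1-b)(SB\phi_+)(t)\bigr)\leq 0,\qquad t\neq\zeta.
$$

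On the left branch $t\leq\zeta$, every argument $s=t-jc\tau$ of $\phi_+$ lies in $(-\infty,\zeta]$, so $B\phi_+(t)$ and $SB\phi_+(t)$ reduce to explicit geometric sums, the denominator $1-be^{-\lambda_2 c\tau}$ being positive because $\lambda_2>0$. Factoring out $ae^{\lambda_2 t}$, the linear part of the left-hand side becomes exactly $\chi_0(\lambda_2)=0$, leaving only the strictly negative quadratic remainder $-(1-b)(B\phi_+)(t)(SB\phi_+)(t)$, so the inequality holds with strict slack.

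On the right branch $t\geq\zeta$ I would pass to the shifted variable $u(s):=1-\phi_+(s)$, defined on all of $\R$, with $u(s)=e^{\mu_1 s}$ for $s\geq\zeta$ and $u(s)=1-ae^{\lambda_2 s}\in(0,1)$ for $s<\zeta$. Using linearity of $B,S$ together with $B1=1/(1-b)$ on the constant function $1$, the super-solution inequality is equivalent to
$$
u''(t)-cu'(t)-(SBu)(t)+(1-b)(Bu)(t)(SBu)(t)\geq 0.
$$
Differentiating $u$ and invoking $\chi_1(\mu_1)=0$ identifies $u''(t)-cu'(t)$ with $e^{-\mu_1 c\tau}e^{\mu_1 t}/(1-be^{-\mu_1 c\tau})$; the denominator is positive since the analyticity domain of $\chi_1$ forces every zero to satisfy $\Re z>\ln b/(c\tau)$. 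The decisive step is a global exponential bound $u(s)\leq e^{\mu_1 s}$ for every $s\in\R$, which is an equality on $[\zeta,+\infty)$ and, on $(-\infty,\zeta)$, is exactly inequality (\ref{aux}) rewritten in terms of $u$. Inserting this majorant into the geometric sum defining $SBu$ gives $(SBu)(t)\leq e^{-\mu_1 c\tau}e^{\mu_1 t}/(1-be^{-\mu_1 c\tau})$, which cancels exactly against $u''-cu'$, while the remaining $(1-b)(Bu)(SBu)\geq 0$ only strengthens the inequality.

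The main obstacle is identifying the uniform exponential majorant $e^{\mu_1 s}$ for $u$ across the whole line: this is the bridge linking the two branches of $\phi_+$, and it is precisely what the tangential contact at $\zeta$, guaranteed by $c>c_*(\tau)$ together with the opposite convexities of $ae^{\lambda_2 t}$ and $1-e^{\mu_1 t}$, provides via (\ref{aux}). Once the majorant is in hand, the verification becomes the pointwise identity between $u''-cu'$ and the geometric-sum bound on $SBu$, which is nothing more than the characteristic equation $\chi_1(\mu_1)=0$ read off the right-hand side.
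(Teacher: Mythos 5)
Your proof is correct and follows essentially the same route as the paper: $\chi_0(\lambda_2)=0$ disposes of the branch $t<\zeta$ after dropping the nonnegative quadratic term, and on $t>\zeta$ inequality (\ref{aux}) supplies the exponential majorant for the delayed geometric sum, which cancels against $u''-cu'$ via $\chi_1(\mu_1)=0$ together with the positivity of $1-be^{-\mu_1 c\tau}$. Your passage to $u=1-\phi_+$ and discarding of $(1-b)(Bu)(SBu)\geq 0$ is only a cosmetic repackaging of the paper's estimate, which instead keeps $\phi_+$ and uses $(1-b)(B\phi_+)<1$.
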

\begin{proof} We only need to check the inequality
\begin{equation}\label{In}
\phi_+''(t) -c\phi_+'(t) + (B\phi_+)(t)(1 - (1-b)(SB\phi_+)(t)) \leq 0, \quad t\not=\zeta.
\end{equation}
Since $e^{\lambda_2 t}$ is an eigenfunction for the linear equation $y''(t) -cy'(t) + (By)(t)=0$, relation (\ref{In}) clearly holds for all $t < \zeta$.

Now, if $t \in [\zeta + mc\tau, \zeta + (m+1)c\tau]$ with $m \geq 0$, using (\ref{aux}) we find  that
\begin{eqnarray}
{\lefteqn{ 1 - (1-b)(BS\phi_+)(t) }}\h \n\\
 &= & 1 - (1-b)\sum_{j=0}^{+\infty}b^j\phi_+(t-(j+1)c\tau) \n\\
&=& 1 - (1-b) [\sum_{j=0}^{m-1}b^j(1-e^{\mu_1(t-c\tau(j+1))})+ \sum_{j=m}^{+\infty}b^jae^{\lambda_2(t-c\tau(j+1))} ]\n\\
&=&  1 - (1-b)\sum_{j=0}^{+\infty}b^j(1-e^{\mu_1(t-c\tau(j+1))})\n\\
&& + (1-b)\sum_{j=m}^{+\infty}b^j(1-e^{\mu_1(t-c\tau(j+1))} -ae^{\lambda_2(t-c\tau(j+1))})\n\\
&<&   (1-b)\sum_{j=0}^{+\infty}b^je^{\mu_1(t-c\tau(j+1))} = (1-b)e^{\mu_1t}\frac{e^{-\mu_1c\tau}}{1-be^{-\mu_1c\tau}}.\n
\end{eqnarray}
Therefore, for the same values of $t$, since $B\phi_+ < B 1 = 1/(1-b),$ we obtain that
\begin{eqnarray}
{\lefteqn{\phi_+''(t) -c\phi_+'(t) + (B\phi_+)(t)(1 - (1-b)(SB\phi_+)(t)}}\n \h\h  \\
&<& (-\mu_1^2+c\mu_1)e^{\mu_1t} + (1-b)e^{\mu_1t}\frac{e^{-\mu_1c\tau}}{1-be^{-\mu_1c\tau}} (B\phi_+)(t)\n\\
&=&
-e^{\mu_1t}\frac{e^{-\mu_1c\tau}}{1-be^{-\mu_1c\tau}} e^{\mu_1t}\left[1 - (1-b)(B\phi_+)(t)\right] <0. \nonumber
\end{eqnarray}
This completes the proof of Lemma \ref{US}.  \hfill $\square$
\end{proof}

Next, for $(c,\tau) \in \frak{D}$ with $c> c_*(\tau)$,  $M>1,\epsilon >0$ and for $a$ as in $\phi_+(t)$, we will consider the following well known ansatz for  sub-solution
$$ \phi_-(t) = \left\{
\begin{array}{ccc}
ae^{\lambda_2 t}(1- Me^{\epsilon t}), && t \geq \xi, \\
0, && t \leq \xi.
\end{array} \right. $$
Here $\xi=\xi(M,\epsilon)$ is chosen to assure the continuity of  $\phi_-(t), \ t \in \R$. Clearly, $\phi_-(\xi-) < \phi_-(\xi+)=0$ and $\phi_-(t) < \phi_+(t)$ for all $t \in \R$.
\begin{lemma}\label{LS}  For $(c,\tau) \in \frak{D}$ with $c> c_*(\tau)$, and some appropriate  $M>1,\ \epsilon \in (0,\lambda_2)$, the function $\phi_-(t)$ is a sub-solution for  (\ref{PEW}).
\end{lemma}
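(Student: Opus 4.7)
The plan is to verify the four conditions in Definition \ref{D1}: non-negativity and continuity (with $\phi_-(-\infty)=0$), the differential inequality for $t\neq\xi$, and the one-sided derivative condition $\phi_-'(\xi-)\leq\phi_-'(\xi+)$. Continuity forces $1-Me^{\epsilon\xi}=0$, that is $\xi=-\epsilon^{-1}\ln M<0$, which makes $\phi_-$ non-negative on its non-trivial piece and yields the required boundary limits. A direct computation at $\xi$ shows the one-sided derivatives differ by $-a\epsilon e^{\lambda_2\xi}<0$, matching the required sign. On the piece where $\phi_-$ vanishes, $\phi_-''-c\phi_-'=0$ and $B\phi_-\geq 0$; the bound $(1-b)SB\phi_-\leq 1$ comes from $\phi_-\leq\phi_+$ (to be secured by the final choice of $M$) combined with $B\phi_+\leq B(1)=1/(1-b)$, so the sub-solution inequality is immediate there.

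The substantive step is the inequality on the non-trivial piece. The key observation is that whenever $t$ lies there and $j\geq 0$, so does $t-jc\tau$, hence $B\phi_-(t)$ and $SB\phi_-(t)$ coincide with $B\tilde\phi(t)$ and $B\tilde\phi(t-c\tau)$ respectively, where $\tilde\phi(t):=ae^{\lambda_2 t}(1-Me^{\epsilon t})$ is the natural global extension. Summing the resulting geometric series yields
$$
B\tilde\phi(t)=\frac{a\,e^{\lambda_2 t}}{1-be^{-\lambda_2 c\tau}}-\frac{aM\,e^{(\lambda_2+\epsilon)t}}{1-be^{-(\lambda_2+\epsilon)c\tau}}.
$$
Plugging this into $\phi_-''-c\phi_-'+B\phi_-$ and invoking $\chi_0(\lambda_2)=0$ produces a clean cancellation, leaving the linear residual
$$
\phi_-''(t)-c\phi_-'(t)+B\phi_-(t)=-aM\,\chi_0(\lambda_2+\epsilon)\,e^{(\lambda_2+\epsilon)t}.
$$
Since $c>c_*(\tau)$ forces $\lambda_2<\lambda_1$ strictly (Lemma \ref{L4}), we have $\chi_0(\lambda_2+\epsilon)<0$ for every $\epsilon\in(0,\lambda_1-\lambda_2)$, so this residual is strictly positive.

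The remaining nonlinear correction $(1-b)\,B\phi_-(t)\cdot SB\phi_-(t)$ is dominated by a constant multiple of $e^{2\lambda_2 t}$. Choosing $\epsilon\in(0,\min(\lambda_2,\lambda_1-\lambda_2))$ ensures $2\lambda_2>\lambda_2+\epsilon$, so the nonlinear term decays strictly faster than the residual; the worst case is $t=\xi$, where the ratio of the two terms behaves like $M^{-\lambda_2/\epsilon}\to 0$ as $M\to\infty$. Hence for all sufficiently large $M$ the desired inequality holds uniformly on the non-trivial piece. The main obstacle is to check that a single choice of $M$ simultaneously secures $M>1$, $\xi<0$, domination of the nonlinear term at the critical point $t=\xi$, and the global comparison $\phi_-\leq\phi_+$ required by the trivial-piece estimate; all four requirements are satisfied once $M$ is taken large enough.
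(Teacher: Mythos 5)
Your proposal is correct and follows essentially the same route as the paper's proof: on the nontrivial piece the geometric series for $B\phi_-$ is summed exactly, the cancellation $\chi_0(\lambda_2)=0$ leaves the residual $-aM\chi_0(\lambda_2+\epsilon)e^{(\lambda_2+\epsilon)t}>0$, and the quadratic term, bounded by a constant multiple of $e^{2\lambda_2 t}$, is absorbed at the worst point $t=\xi$ via the factor $M^{-\lambda_2/\epsilon}$. Your explicit remarks that one must also take $\epsilon<\lambda_1-\lambda_2$ (so that $\chi_0(\lambda_2+\epsilon)<0$, which requires $c>c_*(\tau)$) and that the nontrivial piece of $\phi_-$ must be the half-line $(-\infty,\xi]$ (so that $t-jc\tau$ stays there) are consistent with, and slightly more careful than, the paper's presentation.
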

\begin{proof} In view of the above said, it suffices to check the inequality
$$
\Psi:= \phi_-''(t) -c\phi_-'(t) + (B\phi_-)(t)(1 - (1-b)(SB\phi_-)(t) \geq 0
$$
only for $t < \xi$.  Since,  for these values of $t$,
$$
(B\phi_-)(t)= \frac{ae^{\lambda_2 t}}{1-be^{-\lambda_2c\tau}} - M\frac{ae^{(\lambda_2+\epsilon) t}}{1-be^{-(\lambda_2+\epsilon)c\tau}} < \frac{ae^{\lambda_2 t}}{1-be^{-\lambda_2c\tau}},
$$
and, consequently,
$$
(SB\phi_-)(t) < S\left( \frac{ae^{\lambda_2 \cdot}}{1-be^{-\lambda_2c\tau}}\right)(t)=  \frac{ae^{\lambda_2 (t-c\tau)}}{1-be^{-\lambda_2c\tau}},
$$
we conclude that for all $t < \xi$ and sufficiently large $M$,
\begin{eqnarray}
\Psi&=& -aM\chi_0(\lambda_2+\epsilon)e^{(\lambda_2+\epsilon)t} - (1-b) (B\phi_-)(t) (SB\phi_-)(t)\h \n\\
&\geq&
aMe^{(\lambda_2+\epsilon)t}\left[-\chi_0(\lambda_2+\epsilon)- a\frac {1-b} M \frac{e^{(\lambda_2-\epsilon) \xi}e^{-\lambda_2c\tau}}{(1-be^{-\lambda_2c\tau})^2}\right]\n\\
&=& aMe^{(\lambda_2+\epsilon)t}\left[-\chi_0(\lambda_2+\epsilon)-   \frac{a}{M^{^{\frac{\lambda_2}{\epsilon}}}} \frac{(1-b)e^{-\lambda_2c\tau}}{(1-be^{-\lambda_2c\tau})^2}\right] >0.\nonumber 
 \end{eqnarray}
The proof of Lemma \ref{LS} is completed.   \hfill $\square$
\end{proof}
Hence, the above lemmas show that for each $(c,\tau) \in \frak{D}$ with $c> c_*(\tau)$,  all assumptions of Theorem \ref{main1} are satisfied, implying the existence of a monotone wavefront
for equation (\ref{KFN}).  Now, in the critical case when $(c_0,\tau_0) \in \frak{D}$ and  $c_0=c_*(\tau_0)$, using simple geometry of the domain $\frak{D}$ (cf. Lemma \ref{IT}),
we can find a sequence of points $(c_n,\tau_n) \in \frak{D}$ with $c_n> c_*(\tau_n) >2$ and such that $c_n\to c_0, \ \tau_n \to \tau_0$.   By the previous conclusion, we know that for each  pair $(\tau_n, c_n)$ there is a monotone traveling wave $\phi_n(t)$ for equation (\ref{PEW}). Due to translation invariance property of $F$, we can
normalize $\phi_n$   by the condition $\phi_n(0)=0.5$. Now, applying the arguments given below equation (\ref{PE}) to the equation (\ref{PEW}), we conclude that the derivatives $\phi_n'(t)$ are uniformly bounded in $t\in \R$ and $n\in \N$. Therefore $\phi_n(t)$ converges, uniformly on bounded sets, to some nondecreasing bounded  function $\phi_*(t), \ \phi_*(0)=0.5$ (alternatively, we can use the Helly's selection theorem in order to get pointwise convergence of $\phi_n(t)$).
Taking limit,  as $n \to +\infty$, in an appropriate integral form of the differential equation
$$
\phi_n''(t) -c_n\phi_n'(t) + (B_n\phi_n)(t)(1 - (1-b)(S_nB_n\phi_n)(t)) =0,
$$
we find that $\phi_*(t)$ also satisfies (\ref{PEW}).  Since $\phi_*(0)= 0.5$, arguing as in the final part  of the proof of Theorem \ref{main1}, we conclude that
$\phi_*(+\infty)= 1, \ \phi_*(-\infty)=0$.
\subsection{Necessity of assumptions on $\chi_0(z)$}\label{SUB42}
The result announced  in the title of this subsection  follows from the next
\begin{lemma}\label{sms2}  If  problem (\ref{PEW}) has a monotone non-constant bounded solution $\phi(t)$, then  $\phi'(t)>0$ for all $t\in \R$  and  $\chi_0(z)$ has two positive zeros $\lambda_2 \leq \lambda_1$. Moreover,
for some $\varepsilon >0$, and an appropriate $t_a, d \in \R$, it holds
$$
\phi(t+t_a) = e^{\lambda_2t}((-t)^{j}+jd+ o(e^{\varepsilon t})), \ t \to -\infty,
$$
where $j=0$ if $\lambda_1 < \lambda_2$ and $j=1$ when $\lambda_1 =\lambda_2$.
 \end{lemma}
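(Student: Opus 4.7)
I would organise the proof along three steps: strict monotonicity of the profile, existence of positive real zeros of $\chi_0$, and extraction of the precise asymptotic form.

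\textbf{Step 1 (strict monotonicity).} Suppose, for contradiction, that $\phi'(t_0)=0$ for some $t_0\in\R$. Since $\phi'\geq 0$ has an interior minimum at $t_0$, we also have $\phi''(t_0)=0$. Because $\phi(-\infty)=0$, the function $\phi$ is not identically $1$ on any left half-line, hence $(SB\phi)(t_0)<1/(1-b)$ and the factor $1-(1-b)(SB\phi)(t_0)$ is strictly positive. Substituting into~(\ref{PEW}) forces $(B\phi)(t_0)=0$; since $B\phi=\sum_{j\geq 0}b^jS^j\phi$ is a nonnegative series whose $j=0$ term is $\phi(t_0)$, monotonicity of $\phi$ yields $\phi\equiv 0$ on $(-\infty,t_0]$. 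On the next window $(t_0,t_0+c\tau]$ the nonlocal terms in~(\ref{PEW}) collapse and the equation reduces to the linear ODE $\phi''-c\phi'+\phi=0$ with vanishing Cauchy data at $t_0$; Picard uniqueness propagates $\phi\equiv 0$ block by block to all of $\R$, contradicting non-constancy.

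\textbf{Step 2 (existence of positive zeros of $\chi_0$ and exponential decay).} Integrate~(\ref{PEW}) from $-\infty$ to $T$, using $\phi(-\infty)=\phi'(-\infty)=0$ (Barbalat's lemma, exactly as in the derivation of~(\ref{PE})), and let $T\to+\infty$ to conclude that $\int_\R (B\phi)(s)\bigl[1-(1-b)(SB\phi)(s)\bigr]ds=c<\infty$; since the bracket tends to $1$ as $s\to-\infty$, this yields integrability of $\phi$ on $(-\infty,0]$. A standard bootstrap against $e^{\alpha t}$-supersolutions of the linearisation upgrades this to $\phi(t)=O(e^{\alpha t})$ for some $\alpha>0$. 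Introduce the one-sided transform $\tilde\phi(\lambda)=\int_{-\infty}^0\phi(t)e^{-\lambda t}dt$, analytic in $\{\Re\lambda<\alpha\}$, and apply it to~(\ref{PEW}) using the identity $B(e^{\lambda\cdot})(t)=e^{\lambda t}/(1-be^{-\lambda c\tau})$. The result is a functional relation of the form
$$\chi_0(\lambda)\,\tilde\phi(\lambda)=H(\lambda),$$
where $H$ collects boundary contributions and the Laplace transform of the quadratic remainder $(1-b)(B\phi)(SB\phi)$, and is analytic in a strictly larger right half-strip. If $\chi_0$ had no positive real root it would be positive on all of $\R_+$ (by the convexity analysis underlying Lemma~\ref{L4}), so the identity would meromorphically continue $\tilde\phi$ arbitrarily far to the right; iterating, $\phi$ would decay faster than every exponential, contradicting its non-triviality via Lemma~\ref{sms3}. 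Hence $\chi_0$ admits two positive zeros $\lambda_2\leq\lambda_1$.

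\textbf{Step 3 (precise asymptotics).} The rightmost real singularity of $\tilde\phi$ on the positive axis is exactly $\lambda_2$: a simple pole if $\lambda_2<\lambda_1$ and a double pole in the critical case $\lambda_2=\lambda_1$. An Ikehara-type Tauberian theorem, in the variant adapted to neutral equations with infinite delay as in~\cite{SoT}, together with Lemma~\ref{L4}(B) — which locates every remaining zero of $\chi_0$ in $\{\Re z<\lambda_2\}$ — produces, after choosing $t_a$ to normalise the leading coefficient to one, the expansion
$$\phi(t+t_a)=e^{\lambda_2 t}\bigl((-t)^{j}+jd+o(e^{\varepsilon t})\bigr),\quad t\to-\infty,$$
with $j\in\{0,1\}$ according to the multiplicity, $\varepsilon>0$ any number strictly below the spectral gap between $\lambda_2$ and the next real part of a zero of $\chi_0$, and $d$ determined by the subleading Laurent coefficient at the double pole.

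\textbf{Main obstacle.} The delicate point is the critical case $\lambda_2=\lambda_1$: one must simultaneously extract both coefficients $(-t)e^{\lambda_2 t}$ and $d\,e^{\lambda_2 t}$ and upgrade a distributional/integrated Tauberian conclusion to a pointwise $o(e^{\varepsilon t})$ bound, while keeping control of the quadratic perturbation and of the infinite-delay convolution structure encoded in the kernel of Lemma~\ref{LV}.
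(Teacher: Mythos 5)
Your Step 1 is a correct, if different, route to strict monotonicity: the paper first derives $\phi>0$ from the representation (\ref{ieq}) with the positive kernel $N_1$ and then reads off a sign contradiction at a critical point, whereas you force $\phi\equiv 0$ on a left half-line and propagate forward by ODE uniqueness; both arguments work. The serious problem is at the crux of Step 2. The implication ``$\chi_0$ has no positive real zero $\Rightarrow$ $\tilde\phi=H/\chi_0$ continues meromorphically to the right $\Rightarrow$ $\phi$ decays super-exponentially'' does not follow as written. A meromorphic continuation of the one-sided transform says nothing by itself about the decay of the function: the transform of $e^{at}\sin(bt)$ on $\R_-$ continues meromorphically to all of $\C$ (with poles at $a\pm ib$) although the function decays only like $e^{at}$. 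Positivity of $\chi_0$ on $\R_+$ does not exclude complex zeros with positive real part, and these are exactly what would defeat your conclusion. To close the gap you must use the positivity of $\phi$: either via Landau's theorem (the abscissa of convergence of a Laplace transform with positive integrand is itself a singular point, hence must be a real zero of $\chi_0$), or, as the paper does, via the residue expansion of \cite[Lemma 22]{tat}, whose leading term would change sign near $-\infty$ if the dominant zeros were a complex-conjugate pair, contradicting $\phi>0$ and monotonicity. Two smaller gaps in the same step: integrability of $\phi$ at $-\infty$ does not upgrade to $\phi(t)=O(e^{\alpha t})$ by a generic ``supersolution bootstrap'' --- the paper proves the upper exponential bound by the Harnack-type iteration $\phi(t)\ge q\,\phi(t-N)$ with $q>1$ extracted from (\ref{ieq}); and the lower, non-super-exponential bound (needed before any transform or residue machinery applies) is obtained there in one line, $\phi(t)\ge\alpha\phi(0)e^{z_2(t-1)}$, whereas your appeal to Lemma \ref{sms3} would first require recasting the second-order profile equation as a first-order system in $UC_\rho$ and verifying the smallness hypothesis on the quadratic term.

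In Step 3, an Ikehara-type Tauberian theorem yields only the leading behaviour $\phi(t)\sim Ce^{\lambda_2 t}$ (or $C|t|e^{\lambda_2 t}$ in the critical case); it does not produce the two-term expansion $e^{\lambda_2 t}\bigl((-t)^j+jd+o(e^{\varepsilon t})\bigr)$ with an exponentially small remainder, which is precisely the content of the lemma and is what the uniqueness argument of Section 3 consumes. The paper gets this stronger statement directly from the residue formula of \cite[Lemma 22]{tat} combined with part $({\bf B})$ of Lemma \ref{L4}, which places every other zero of $\chi_0$ strictly to the left of $\lambda_2$. You correctly flag the critical case as the main obstacle, but the fix is to replace the Tauberian step by the contour-shifting/residue argument rather than to refine Ikehara.
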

\begin{proof}  Let $z_1 <0<z_2$ denote the roots of equation $z^2-cz-1=0$. Consider
$$ N_1(t) = \alpha \left\{
\begin{array}{ccc}
e^{z_2 t} &,& t \leq 0 \\
e^{z_1 t}&,& t \geq 0
\end{array}, \right. $$
where positive $\alpha$ is chosen to comply with the normalization condition $\int_\R N_1(s)ds =1$.

Since bounded  profile $\phi$ satisfies the differential equation
\begin{equation} \label{pt}
\phi''(t) -c\phi'(t) -\phi(t) + \left(\phi(t)+(B\phi)(t)(1 - (1-b)(SB\phi)(t)\right) =0,
\end{equation}
we obtain that
\begin{equation}\label{ieq}
\phi(t) = \int_{-\infty}^{+\infty}N_1(t-s)\left[\phi(s)+(B\phi)(s)(1 - (1-b)(SB\phi)(s))\right]ds >0, \  t \in \R.
\end{equation}
Equation (\ref{pt}) and inequality in  (\ref{ieq}) imply  that $\phi'(t) =0,\ \phi''(t) \geq 0$ can not hold   simultaneously,  so that $\phi'(t) >0$ for all $t$.

  Next, take $N>0$ such that $q:=(2-b)\int_{-N}^NN_1(s)ds >1$.  Set $t_0:= \phi^{-1}(b)-N$, then
$0 <\phi(t) < b$ for all $t < t_0+N$, so that, using  monotonicity of $\phi(t)$,  we conclude that
\begin{eqnarray}
\phi(t) &\geq& \int_{t-N}^{t+N}N_1(t-s)[\phi(s)+(1-b)(B\phi)(s)]ds \n\\
&\geq&  (2-b)\int_{t-N}^{t+N}N_1(t-s)\phi(s)ds
  \n\\
&\geq&  q\phi(t-N),\n
   \end{eqnarray}
for all $t < t_0$.
Since $q>1$, the latter implies that $0\leq \phi(t) \leq de^{-\gamma t}, \ t \leq t_0,$ where $\gamma =N^{-1}\ln q >0$ and $d$ is some positive constant.
A lower exponential estimate for $\phi$ can also be obtained:
$$
\phi(t) \geq \int_{-\infty}^{+\infty}N_1(t-s)\phi(s)ds \geq \int_{0}^{1}N_1(t-s)\phi(s)ds \geq \alpha \phi(0) e^{z_2(t-1)}, \ t \leq 0.
$$
Since $\phi(t)$ is decaying exponentially but not super-exponentially, we can apply  \cite[Lemma 22]{tat} (see also \cite[Lemma 28]{GT}) to conclude that
$$
y(t) = -(1-b)\sum_k \mbox{Res}_{z=\lambda_k}\left[\frac{e^{zt}}{\chi_0(z)}\int_\R e^{-zs}(B\phi)(s)(SB\phi)(s)ds\right](1+o(e^{\epsilon t})), \ t \to -\infty.
$$
where the sum is calculated over some finite set of zeros $\lambda_k, \ \Re \lambda_k >0$ of $\chi_0(z)$ and $\epsilon$ is some small positive number.  After a straightforward calculation, invoking with  the positivity of $(B\phi)(t)(SB\phi)(t)$ and $\phi(t)$ on $\R$, we conclude that $\chi_0(z)$ must have positive zeros $\lambda_2 \leq \lambda_1$. This yields  the required asymptotic formula
$$
y(t) = -(1-b)\mbox{Res}_{z=\lambda_2}\left[\frac{e^{zt}}{\chi_0(z)}\int_\R e^{-zs}(B\phi)(s)(SB\phi)(s)ds\right](1+e^{\epsilon t}),\  t \to -\infty.  \quad \square
$$
\end{proof}

\subsection{Necessity of assumptions on $\chi_1(z)$}
Lemmas \ref{L4} and \ref{sms2} imply that every admissible propagation speed $c$ for equation (\ref{PE}) is bigger than $2$, $c >2$. Therefore the quadratic
equation $z^2-cz+1=0$ has exactly two positive roots, $r_1<r_2$ and, for every $f\in C_b(\R,\R)$, the unique bounded on $\R$ solution
of equation
$
y''-cy+y =f(t)
$
can be written in the form $y(t) = \int_t^{+\infty}K_2(t-s)f(s)ds$ where $K_2(s) = \beta(e^{r_1s}-e^{r_2s}) >0,\ s < 0$ and positive normalization constant
$\beta$ is chosen to satisfy $\int_{\R_-} K_2(s)ds=1$, cf. \cite{GT}.  We will use this observation and a nice trick proposed in \cite[p. 3050]{FZ} to prove
the following result.
\begin{lemma} \label{nk} If  $\chi_1(z)$ does not have negative zeros,  the inequality $y(t): = 1 - \phi(t) >0$ fails to hold for all  $t \in \R$.
\end{lemma}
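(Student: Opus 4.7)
The plan is to argue by contradiction, adapting the trick of \cite[p.~3050]{FZ} to the neutral setting. Assume $y(t) := 1 - \phi(t) > 0$ for all $t \in \R$ while $\chi_1(z)$ has no negative real zero. Setting $V(t) := y(t) - by(t-c\tau)$ and substituting $\phi = 1 - y$ into (\ref{PE}), a direct computation gives
\begin{equation*}
V''(t) - cV'(t) + V(t) = y(t) + (1-b)y(t-c\tau) - y(t)y(t-c\tau) =: f(t),
\end{equation*}
and $f(t) = y(t)(1 - y(t-c\tau)) + (1-b)y(t-c\tau) \geq 0$ because $y \in [0,1]$. Since $c > 2$, the second-order operator on the left inverts via the strictly positive kernel $K_2$ introduced just above, so $V(t) = \int_t^{+\infty} K_2(t-s) f(s)\,ds > 0$ for all $t$. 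In particular, the pointwise inequality $y(t) > b y(t-c\tau)$ holds on $\R$.

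I would next show that the decay abscissa
\begin{equation*}
\Lambda := \sup\{\mu > 0 : e^{\mu t} y(t) \text{ is bounded on } \R_+ \}
\end{equation*}
lies in $(0,+\infty)$. Exponential decay of $y$ at $+\infty$ follows from a bootstrap exploiting boundedness of $f$ together with the exponential tails of $K_2$, in the spirit of the proof of Lemma \ref{sms2}; super-exponential decay is ruled out by the forthcoming Lemma \ref{sms3}.

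The central step is the Laplace transform trick. Multiplying the equation for $V$ by $e^{-\lambda t}$, integrating on $(T,+\infty)$, performing two integrations by parts, and then expressing everything in terms of $I(\lambda) := \int_T^{+\infty} e^{-\lambda t} y(t)\,dt$ via $V = (I - bS)y$, one arrives after algebraic rearrangement at
\begin{equation*}
(1 - be^{-\lambda c\tau})\, \chi_1(\lambda)\, I(\lambda) = R(\lambda),
\end{equation*}
where $R(\lambda)$ gathers finite boundary contributions at $t = T$, an integral of $y$ over the bounded interval $[T-c\tau,T]$, and the Laplace transform of the quadratic term $y(t)y(t-c\tau)$. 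Since the latter decays at rate $2\Lambda$, $R(\lambda)$ is analytic in the strictly larger half-plane $\Re\lambda > -2\Lambda$. By Pringsheim's theorem applied to the positive function $y$, $I(\lambda)$ must be singular at $\lambda = -\Lambda$, with $I(-\Lambda^+) = +\infty$. Since $R(-\Lambda)$ is finite, the prefactor $(1-be^{-\lambda c\tau})\chi_1(\lambda)$ has to vanish at $\lambda = -\Lambda$. The pointwise bound $y(t) > b y(t-c\tau)$ established above yields $\Lambda \leq -\ln b/(c\tau)$, so that $-\Lambda \geq \ln b/(c\tau)$. The main obstacle is the resonant boundary case $-\Lambda = \ln b/(c\tau)$, in which the neutral factor $1 - be^{\Lambda c\tau}$ itself vanishes and could, in principle, absorb the singularity; it is excluded by a refined asymptotic analysis of $V$ at $+\infty$, using the observation that in the resonant situation the leading exponential of $V$ cancels, forcing $V$ to decay strictly faster than $y$, and then bootstrapping on the corresponding identity for $V$ to obtain a contradiction with $V > 0$. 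Once this case is eliminated, $1 - be^{\Lambda c\tau} \neq 0$ at $\lambda = -\Lambda$, and therefore $\chi_1(-\Lambda) = 0$ with $-\Lambda < 0$ --- a negative real zero of $\chi_1$, contradicting the hypothesis.
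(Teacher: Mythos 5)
Your opening step is correct and is a genuinely different reduction from the paper's: passing to $V=y-bSy$, computing $V''-cV'+V=f\geq 0$ and inverting with the positive kernel $K_2$ does give $V>0$, hence the pointwise bound $y(t)>by(t-c\tau)$ and with it $\Lambda\leq -\ln b/(c\tau)$. The paper never forms $V$: it keeps the equation in the form $y''-cy'-(1-b)(B\phi)(SBy)=0$, derives a lower bound $y(t)\geq Ce^{-\gamma t}$, sets $\gamma_*:=-\liminf_{t\to+\infty} t^{-1}\ln y(t)\in[0,\gamma]$, and then uses the sign information $\chi_1(-\gamma_*)<0$ (which is all that ``no negative zeros'' provides, since $\chi_1(0)<0$) to improve the lower bound by a fixed factor $d>1$ over each window of length $j_0c\tau$, contradicting the definition of $\gamma_*$. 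Both arguments descend from the trick of Fang and Zhao, but yours is implemented on the Laplace side and the paper's on the time side, and this difference is exactly where your version runs into trouble.

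There are two genuine gaps. First, the resonant case $-\Lambda=\ln b/(c\tau)$, where the neutral prefactor $1-be^{-\lambda c\tau}$ vanishes, is not a removable nuisance: your own bound $\Lambda\leq-\ln b/(c\tau)$ allows equality, and the sketched resolution (``the leading exponential of $V$ cancels, forcing $V$ to decay strictly faster than $y$\dots'') presupposes a genuine exponential asymptotics $y(t)\sim Ce^{-\Lambda t}$, which is precisely what is unknown at this stage; no contradiction with $V>0$ is actually produced. In the paper this regime is where the argument is easiest, not hardest: the truncation $\sum_{j=0}^{j_0}b^je^{jc\tau\gamma_*}$ makes the key inequality (\ref{ga}) trivially satisfiable whenever $be^{c\tau\gamma_*}\geq 1$, which covers your resonant case. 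Second, you need $\Lambda>0$ so that $R(\lambda)$ is analytic in a half-plane strictly containing $-\Lambda$; this strictly positive exponential decay rate of $y$ at $+\infty$ is asserted, not proved, and the appeal to Lemma \ref{sms3} is misplaced --- that lemma excludes super-exponentially small solutions at $-\infty$ for retarded equations on $\R_-$, whereas super-exponential decay at $+\infty$ is already excluded by $y(t)>by(t-c\tau)$; what is missing is the opposite bound, an exponential upper estimate for $y$. The paper's proof requires neither ingredient, since it tolerates $\gamma_*=0$. As written, your argument is incomplete at both points.
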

\begin{proof} Assume that  $\chi_1(z)$ does not have negative zeros and, nevertheless, $y(t) >0$ for all $t\in \R$. By a straightforward calculation, we obtain that
$$
y''(t)-cy'(t) - (1-b)(B\phi)(t)(SBy)(t) =0,
$$
so that,
$$
y(t) = \int_t^{+\infty}K_2(t-s)\left[y(s) +  (1-b)(B\phi)(s)(SBy)(s)\right]ds.
$$
Since $(1-b)(B\phi)(t)$ is monotonically increasing to $1$,  for every small $\delta \in (0,0.5)$ there exists $t_b$ such that $(1-b)(B\phi)(t) > 1-\delta$ for $t \geq t_b$.
Hence,
\begin{equation}\label{yy}
y(t) \geq \int_t^{+\infty}K_2(t-s)\left[y(s) +  (1-\delta)(SBy)(s)\right]ds, \ t \geq t_b.
\end{equation}
In particular, setting $p = 0.5\int_{-c\tau/2}^0K_2(s)ds \in (0,1)$, we find that
$$
y(t) \geq (1-\delta)\int_t^{t+c\tau/2}K_2(t-s)y(s-c\tau)ds \geq py(t-c\tau/2) , \ t \geq t_b.
$$
Therefore, for some $C>0$ and $\gamma : = -2\ln p/(c\tau) >0$, it holds that $y(t) \geq Ce^{-\gamma t}, \ t \geq t_b$. Thus the following non-negative number $\gamma_*$ is well defined and satisfies
$$
- \gamma_* := \liminf_{t +\infty}\frac 1 t \ln y(t)      \geq -\gamma.
$$
Next, there are $\delta >0$ small enough and an integer $j_0$ large enough to satisfy
\begin{equation}\label{ga}
(1-\delta)e^{c\tau\gamma_*}\sum_{j=0}^{j_0} b^je^{jc\tau\gamma_*} > \gamma_*^2+c\gamma_*.
\end{equation}
This is obvious  when $be^{c\tau\gamma_*} \geq 1$. Now, if $be^{c\tau\gamma_*} <1$ then
$$e^{c\tau\gamma_*}\sum_{j=0}^{+\infty} b^je^{jc\tau\gamma_*} = \frac{e^{\gamma_* c\tau}}{1-be^{\gamma_* c\tau}} > \gamma_*^2+c\gamma_*,
$$
in view of our assumption on the function $\chi_1(z)$ evaluated in $z =-\gamma_*<0$.
This shows that (\ref{ga}) is true for sufficiently large $j_0$ and sufficiently small $\delta$.
Consequently, there exists $d>1$ such that
$$
(\gamma^2+c\gamma+1)^{-1} [1 +  (1-\delta)e^{c\tau\gamma} \sum_{j=0}^{j_0}b^j e^{jc\tau\gamma} ] >d >1
$$
for all $\gamma> \gamma_*$ sufficiently close to $\gamma_*$.  Suppose that, in addition,  such $\gamma$ satisfies also the inequality
$\gamma < \gamma_* + \ln d/(j_0c\tau)$.
Then using the estimate $y(t) \geq Ce^{-\gamma t}, \ t \geq t_b,$ in (\ref{yy})  for $t \geq t_b + j_0c\tau$, we find that
\begin{eqnarray}
y(t) &\geq&  C\int_t^{+\infty}K_2(t-s)e^{-\gamma s} [1 +  (1-\delta)e^{c\tau\gamma} \sum_{j=0}^{j_0}b^j e^{jc\tau\gamma} ]ds \n\\
&\geq&  C\frac{ [1 +  (1-\delta)e^{c\tau\gamma} \sum_{j=0}^{j_0}b^j e^{jc\tau\gamma} ]}{\gamma^2+c\gamma+1} e^{-\gamma s} \n\\ &>&  Cd e^{-\gamma t},\n
\end{eqnarray} for $ t \geq t_b + j_0c\tau.$
This shows that $y(t) \geq Cd^k e^{-\gamma t}, \ t \geq t_b+kj_0c\tau$, so that
$$
-\gamma_* = \liminf_{t +\infty}\frac 1 t \ln y(t)  \geq \lim_{t +\infty}\frac 1 t \ln (d^{t/(j_0c\tau)} e^{-\gamma t}) = -\gamma+ \ln d/(j_0c\tau) > -\gamma_*.
$$
This contradiction completes   the proof of Lemma \ref{nk}.  \hfill $\square$
\end{proof}

\section{Proof of Theorem \ref{main3}}
\subsection{Non-existence of super-exponentially decaying solutions at $-\infty$}
We will  need a non-local version of Lemma 3.6 in \cite{VT} (see also  \cite[Lemma 6]{SoT}). It will allow to
exclude  super-exponentially decaying (at $-\infty$) solutions to some asymptotically autonomous differential equations with infinite delay.
In sequel, for a small positive fixed $\rho$ we will define  the following Banach space of  fading memory type \cite{HK}:
$$
UC_\rho =\{g:\R_- \to \R^m \ \mbox{is such that} \ g(t)e^{\rho t} \ \mbox{is uniformly continuous and bounded on} \ \R_-\}.
$$
$UC_\rho$ will be  equipped with the norm $|g|_\rho:= \sup_{s \leq 0}| g(t)e^{\rho t}|$.

\begin{lemma} \label{sms3} Suppose that $L:UC_\rho \to \R^m$ is continuous linear operator
and $M: (-\infty, 0] \times UC_\rho \to \R^m$ is a continuous function such that
$|M(t,\phi)| \leq \mu(t)|\phi|_\rho$ for some non-negative $\mu(t) \to 0$ as $t\to -\infty$.
Then the system
\begin{equation} \label{LM}
x'(t) = Lx_t+M(t,x_t), \  x_t(s):= x(t+s), \ s \leq 0,
\end{equation}
does not have nontrivial exponentially small solutions at $-\infty$ (i.e. non-zero solutions $x:\R_- \to \R^m$ such that for each $\gamma \in \R$ it holds that $x(t)e^{\gamma t} \to 0, \ t \to -\infty$).
\end{lemma}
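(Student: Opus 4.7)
My plan is to argue by contradiction via a rescaling and compactness argument that reduces to the autonomous linear problem, following the spirit of \cite{VT,SoT} and adapting it to the unbounded-delay setting of the fading-memory space $UC_\rho$.

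Suppose $x:\R_-\to\R^m$ is a nontrivial solution of (\ref{LM}) with $e^{\gamma t}x(t)\to 0$ as $t\to -\infty$ for every $\gamma\in\R$. I would first set $\sigma(a):=|x_a|_\rho$ for $a\leq 0$ and record two elementary facts: the super-exponential smallness of $x$ propagates to $\sigma$ (for each $N>0$ there is $C_N$ with $\sigma(a)\leq C_N e^{Na}$, so $\sigma$ decays faster than any exponential), and $\sigma(a)>0$ for all sufficiently negative $a$ (otherwise the initial value $x_{a_0}=0$ together with forward uniqueness for (\ref{LM}) would force $x\equiv 0$). Then I rescale by $y_a(t):=x(t+a)/\sigma(a)$; by construction $|y_a|_\rho=1$, a direct check from the definition of the weighted sup-norm gives $|(y_a)_t|_\rho\leq e^{-\rho t}$ for $t\leq 0$, and $y_a$ satisfies
$$
y_a'(t)=L((y_a)_t)+R_a(t), \qquad |R_a(t)|\leq \mu(t+a)\,e^{-\rho t}, \quad t\leq 0.
$$

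Next I extract a limit. For each $a$ pick $s_a\leq 0$ with $|y_a(s_a)|e^{\rho s_a}\geq 1/2$; after a further translation (replacing $a$ by $a+s_a$ when $s_a\to -\infty$ and renormalizing, with the ratios $\sigma(a+s_a)/\sigma(a)$ controlled by the bound above) we may assume $|y_a(0)|\geq 1/2$. The pointwise estimate $|y_a(t)|\leq e^{-\rho t}$ and the derivative bound from the equation give uniform boundedness and equicontinuity on every compact subset of $\R_-$; Arzel\`a---Ascoli and a diagonal argument then produce a sequence $a_n\to -\infty$ along which $y_{a_n}\to y^*$ uniformly on compacta, with $y^*\in UC_\rho$ nontrivial ($|y^*(0)|\geq 1/2$), $|y^*|_\rho\leq 1$, and (using $R_{a_n}(t)\to 0$ pointwise and the continuity of $L$ on $UC_\rho$) satisfying the autonomous linear equation $(y^*)'(t)=L(y^*_t)$ on $\R_-$.

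The hard part, which I expect to be the main obstacle, is closing the argument. The natural strategy is to show that $y^*$ is itself super-exponentially small at $-\infty$, while no nontrivial bounded solution of the autonomous linear equation on $UC_\rho$ can have this property: indeed, super-exponential smallness makes the Laplace transform $\widehat{y^*}(\lambda):=\int_{-\infty}^0 e^{-\lambda t}y^*(t)\,dt$ entire, and Laplace-transforming $(y^*)'=Ly^*_t$ gives $\Delta(\lambda)\widehat{y^*}(\lambda)=P(\lambda)$ for some entire function $P$ determined by values of $y^*$ near $0$, where $\Delta(\lambda)=\lambda I - L(e^{\lambda\cdot})$ is the characteristic matrix. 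Since $\det\Delta$ has only finitely many zeros in each vertical strip, a Phragm\'en---Lindel\"of/Liouville argument forces $\widehat{y^*}\equiv 0$, hence $y^*\equiv 0$, contradicting $|y^*(0)|\geq 1/2$. The delicate point is propagating the super-exponential decay from $x$ to $y^*$ through the rescaling: the coarse bound $|y_a(t)|\leq e^{-\rho t}$ only yields exponential control, so the sequence $a_n$ must be chosen adaptively so that the ratios $\sigma(a_n+t)/\sigma(a_n)$ retain super-exponential decay along an appropriate tail. A more robust alternative is to avoid the rescaling entirely and work directly on $x$: read (\ref{LM}) as $x'=Lx_t+f(t)$ with $|f(t)|\leq \mu(t)|x_t|_\rho=o(|x_t|_\rho)$ and perform the Phragm\'en---Lindel\"of analysis directly on $\hat x$, which is entire and sufficiently small at $\infty$ by the super-exponential decay, concluding $\hat x\equiv 0$ and hence $x\equiv 0$.
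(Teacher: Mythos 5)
Your setup (contradiction, rescaling by a norm of the history segment, Arzel\`a--Ascoli, passage to the autonomous limit equation) matches the paper's strategy, but the step you yourself flag as ``the hard part'' is a genuine gap, and neither of your proposed fixes works as stated. After the compactness step you only know that $y^*$ is a nontrivial bounded solution of $ (y^*)'=L(y^*)_t$ on $\R_-$ --- and such solutions exist in abundance (any eigenfunction $e^{\lambda t}v$ with $\Re\lambda\ge 0$), so there is no contradiction yet. Propagating super-exponential smallness from $x$ to $y^*$ fails because the ratios $\sigma(a_n+t)/\sigma(a_n)$ need not retain any super-exponential decay uniformly in $n$; and the Laplace-transform alternative breaks down here for two reasons: the characteristic matrix $\Delta(\lambda)=\lambda I-L(e^{\lambda\cdot})$ is only defined where $e^{\lambda\cdot}\in UC_\rho$, i.e.\ on a half-plane $\Re\lambda\ge-\rho$ rather than on all of $\C$ (so Liouville/Phragm\'en--Lindel\"of cannot be run globally, and the finiteness of zeros in vertical strips is not available for a general continuous $L$ on a fading-memory space with infinite delay), and the nonautonomous term $M(t,x_t)$ does not transform into anything controlled by $\Delta(\lambda)$.

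The paper closes the argument by a different, purely elementary device: it normalizes by $|x_t|_C:=\sup_{s\le t}|x(s)|$ and first proves that super-exponential smallness forces $\inf_{t\le 0}|x_{t-\sigma}|_C/|x_t|_C=0$ for any fixed $\sigma>0$ (otherwise a bound $|x_{t-\sigma}|_C\ge K|x_t|_C$ iterates to contradict smallness). Choosing the translation points $t_j$ along a sequence where this ratio tends to $0$, and rescaling by $|x(s_j)|=|x_{t_j}|_C$ attained at $s_j\in[t_j-\sigma,t_j]$, one gets limit functions $y_*$ with $|y_*(-\sigma_*)|=1$ for some $\sigma_*\in[0,\sigma]$ \emph{and} $y_*(t)\equiv 0$ for $t\le-\sigma$, precisely because $|y_j(t)|\le |x_{t_j-\sigma}|_C/|x_{t_j}|_C\to 0$ for $t\le-\sigma$. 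Forward uniqueness for $y'=Ly_t$ starting from the zero segment at $t=-\sigma$ then forces $y_*\equiv 0$, contradicting $|y_*(-\sigma_*)|=1$. This avoids any spectral or transform analysis of $L$ and is the reason the lemma holds at this level of generality; to complete your proof you would need to incorporate this choice of translation points (or supply a genuine ``no small solutions'' theorem for the autonomous equation on $UC_\rho$, which is not available off the shelf for infinite delay).
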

\begin{proof}  On the contrary, suppose that there exists a non-zero small solution $x(t)$ of $(\ref{LM})$ at $-\infty$. 
Set $|x_t|_C:= \sup\{|x(s)|; \ s \leq t\}$. 
Then  $|x_t|_Ce^{-\gamma t} \to 0, \ t \to -\infty$ for each  $\gamma >0$. 
Take some $\sigma>0$.   We claim that smallness of 
 $x(t)$ implies that 
 $
 \inf_{t \leq 0} |x_{t-\sigma}|_C/|x_t|_C =0. 
 $
 Indeed, otherwise there is $K\in (0,1)$ such that $|x_{t-\sigma}|_C/|x_t|_C \geq K, \ t \leq 0,$ and therefore, setting $\nu:= \sigma^{-1}\ln K$, we obtain the following contradiction: 
 $$
0<  |x_t|_Ce^{\nu t} \leq |x_{t-\sigma}|_Ce^{\nu (t-\sigma)}\leq |x_{t-2\sigma}|_Ce^{\nu (t-2\sigma)}\leq\dots\leq |x_{t-m\sigma}|_Ce^{\nu (t-m\sigma)} \to 0, \ m \to \infty.  
$$
Hence, there is  a sequence $t_j\to -\infty$ such that  $|x_{t_j-\sigma}|_C/|x_{t_j}|_C \to 0$ as $j \to \infty$. Clearly,  for all large $j$,
$|x_{t_j}|_{C} = |x(s_j)|$ for some $s_j \in [t_j-\sigma,t_j]$ and it holds $|x(s_j)| \geq |x(s)|, \ s \leq t_j$.   Since $0\leq t_j-s_j \leq \sigma$, 
without loss of generality we can assume that $\sigma_j:= t_j-s_j \to \sigma_* \in [0,\sigma]$. 

Now, for sufficiently large $j$, consider the sequence 
of functions $y_j \in UC_\rho, \ |y_j|_\rho \leq 1$: 
$$
y_j(t)= \frac{x(t+t_j)}{|x(s_j)|}, \ t \leq 0, \quad |y_j(-\sigma_j)| =1, \quad  |y_j(t)| \leq 1, \ t \leq 0. 
$$
For each $j$ and $t \leq 0$,  $
y_j(t)$ satisfies the equations 
$$
y'(t) = Ly_t+\frac{M(t+t_j,x_{t+t_j})}{|x(s_j)|}, \quad y_j(t) = y_j(-\sigma_j)+ \int^t_{-\sigma_j}\left(Ly_u+\frac{M(u+t_j,x_{u+t_j})}{|x(s_j)|}\right)du, 
$$
and therefore   
$
 |y_j(t)| \leq 1, \ |y_j'(t)| \leq \|L\|+\sup_{s \leq t_j} \mu(s) \leq \|L\|+\sup_{s \leq 0} \mu(s),$ $t\leq 0$, $j \in \N
$ (here $\|\cdot\|$ denotes the operator norm).
Thus, due to the Arzel\`a-Ascoli theorem,  there exists a subsequence $y_{j_k}(t)$ converging, uniformly on  compact subsets of $\R_-$, to some continuous function $y_*(t)$ such that 
$|y_*(-\sigma_*)|=1$ and $y_*(t)=0$ for all $t \leq -\sigma$. 
Clearly, for each fixed $t \leq 0$,  sequence $(y_{j_k})_t$ also converges to $(y_*)_t$ in $UC_\rho$,  and 
$$\frac{|M(t+t_{j_k},x_{t+t_{j_k}})|}{|x(s_{j_k})|}\leq \mu(t+t_{j_k}) \to 0, \quad k \to +\infty,$$
so that by the Lebesgue's bounded convergence theorem, 
$$
y_*(t) = y_*(-\sigma_*)+ \int^t_{-\sigma_*}L(y_*)_udu,\quad  t\leq 0. 
$$
In particular, $y'_*(t) = L(y_*)_t,\  t\leq 0$.  Since $y_*(t)= 0$ for all $t\leq -\sigma $,  the existence and uniqueness theorem applied to the initial 
value problem $y'(t) = Ly_t,$ $ t\leq -\sigma, \ y_\sigma =0,$ implies that  also $y_*(t)= 0$  for all $t \geq -\sigma$. However, this contradicts the fact  that
$|y_*(-\sigma_*)|=1$. The proof of Lemma \ref{sms3} is completed. 
\hfill $\square$
\end{proof}
\subsection{Proof of the wavefront uniqueness}
Suppose that $\psi(t)$, $\phi(t)$ are monotone solutions of the boundary value problem (\ref{PEW}). After shifting $\psi(t)$ if necessary,  we can assume that $\psi$ and $\phi$ have the same leading term in their asymptotic representations given in Lemma \ref{sms2}.  This implies that the function $y(t) = (\psi(t)-\phi(t))e^{-\lambda t}$ where $\lambda \in [\lambda_1(c), \lambda_2(c)]$ is bounded on $\R$ and $y(+\infty)=0$.

Next, consider operators $B_\lambda,  SB_\lambda$ defined on bounded functions $w:\R_-\to \R$
by $B_\lambda w = \sum_{j\geq 0}b^je^{-j\lambda c \tau}w(-jc\tau), \ SB_\lambda w = \sum_{j\geq 0}b^je^{-(j+1)\lambda c \tau}w(-(j+1)c\tau)$. For bounded functions $\psi:\R\to \R$,  it is easy to check  the commutativity relations
$$e^{-\lambda t}(B\psi)(t) = B_\lambda [\psi(\cdot)e^{-\lambda \cdot}]_t, \quad e^{-\lambda t}(SB\psi)(t) = SB_\lambda [\psi(\cdot)e^{-\lambda \cdot}]_t,$$
implying that $y(t)$ satisfies the functional  differential equation
\begin{equation}\label{seca}
y''(t) - (c-2\lambda)y'(t) + (\lambda^2-c\lambda)y(t) + B_\lambda y_t = N(t,y_t),  \ t \in \R,
\end{equation}
with
$$
N(t,y_t):= (B\phi)(t)SB_\lambda y_t+ (SB\psi)(t)B_\lambda y_t.
$$
A straightforward calculation shows that,  for $\rho \in (0, \lambda)$,
the norms of operators $SB_\lambda, \ B_\lambda: UC_\rho \to \R$ and $N: \R_-\times UC_\rho \to \R$ satisfy the following estimates:
$$
\max\{\|SB_\lambda\|, \|B_\lambda\|\} \leq \frac{1}{1-b}, \quad |N(t,w)| \leq   {|w|_\rho}\left((B\phi)(t) + (SB\psi)(t)\right), \ w \in UC_\rho.
$$
We start by analyzing the noncritical speeds $c >c_*(\tau)$. In such a case, $\lambda_2(c) < \lambda_1(c)$ so that we can choose $\lambda  \in (\lambda_2(c), \lambda_1(c))$.
Then the principal term of asymptotic representation of $y(t)$ at $-\infty$ has the form $Ae^{(\lambda_1(t)-\lambda)t}$.

\underline{Case I}. Suppose  that $A=0$.  Since the eigenvalues of  linear equation
$$
y''(t) - (c-2\lambda)y'(t) + (\lambda^2-c\lambda)y(t) + B_\lambda y_t = 0
$$
coincide with the zeros of $\chi_0(z-\lambda)$, Lemma \ref{L4} implies that $\lambda_1-\lambda$ is the unique eigenvalue of the above homogeneous equation in the half-plane
$\Re z \geq 0$. Furthermore, $N(t,y_t) = O(y(t)e^{\lambda_2t}), \ t \to -\infty$. Thus \cite[Lemma 22]{tat} implies that $y(t)$ decays super-exponentially  at $-\infty$.  Consequently,
it is easy to find that $y'(t)$ is also super-exponentially decaying at $-\infty$ (for instance, choose $\lambda >c/2$ and find $v(t)=y'(t)$ solving (\ref{seca}) on $(-\infty,t]$ as the first order ordinary differential equation with respect to $v$). Thus an application of Lemma \ref{sms3} gives that $y(t) \equiv 0$  that amounts to the front uniqueness.

\underline{Case II}. Hence, it suffices to consider  $A>0$ (if $A<0$, we can interchange roles of $\phi$ and $\psi$).  In such a case, $y(t)$ is positive on some maximal open interval
$\Sigma:= (-\infty, \sigma)$, and $y(-\infty)= y(\sigma) =0$ (the interval $\Sigma =\R$ is admitted).  Let $a \in \Sigma$ be the absolute maximum point of $y(t)$ on $\Sigma$. Then clearly
$N(a,y_a)>0$  and $$y''(a) - (c-2\lambda)y'(a) + (\lambda^2-c\lambda)y(a) + B_\lambda y_a < (\lambda^2-c\lambda)y(a) + B_\lambda (y(a))= \chi_0(\lambda)y(a) <0,$$
in contradiction with (\ref{seca}). Thus the case $A\not=0$ can not happen.

\vspace{2mm}

Finally, consider the critical speed $c =c_*(\tau)$. Then there exists finite limit $y(-\infty) =A$; without loss of generality, we can assume that
$A \geq 0$.  Now, if $A=0$, by the same reasons as presented in Case I above, $y(t)\equiv 0$ and the wave uniqueness follows.  If $A>0$, then  $y(t)$ is positive on some maximal open interval  $\Sigma:= (-\infty, \sigma)$, and $y(\sigma) =0$.  Again, after a partial integration of equation (\ref{seca}), we obtain that $v(t)=y'(t)$ is bounded at $-\infty$. Therefore
$y''(t)$ is also bounded at $-\infty$ so that $y'(t)$ is uniformly continuous on $\R_-$. Thus Barbalat's lemma \cite{wz} implies that $y'(-\infty)=0$.  Now, using relations $\chi_0(\lambda)= \chi_0'(\lambda)=0$ with $\lambda = \lambda_2(c)$, and integrating (\ref{seca}) on some interval  $[\alpha,\beta] \subset (-\infty, \sigma)$ we obtain that
$$
y'(\beta)-y'(\alpha) + (2\lambda-c)(y(\beta)-y(\alpha)) + \sum_{j \geq 0} b^je^{-\lambda j c\tau} \left[ \int_{\alpha-jc\tau}^\alpha y(s)ds-\int_{\beta-jc\tau}^\beta y(s)ds\right] = \int_\alpha^\beta Nds.
$$
Now letting $\alpha\to -\infty,\ \beta \to \sigma$, we find that
\begin{eqnarray}
0 < \int^\sigma_{-\infty}N(s,y_s)ds &=& y'(\sigma) -  (2\lambda-c - \sum_{j \geq 0} (jc\tau) b^je^{-\lambda j c\tau} )A \n\\&& -  \sum_{j \geq 0} b^je^{-\lambda j c\tau}\int_{\sigma -jc\tau}^\sigma y(s)ds\n\\
&=&y'(\sigma) -   \sum_{j \geq 0} b^je^{-\lambda j c\tau}\int_{\sigma-jc\tau}^\sigma y(s)ds <0.\n
\end{eqnarray}
This  contradiction shows that necessarily $A=0$ completing  the proof of Theorem \ref{main3}. \hfill $\square$
\section*{Appendix}
\subsection{On upper and lower solutions}
In Definition \ref{D1} of super- and sub-solutions, their first derivatives are allowed to have a jump discontinuity whenever the sign of jump is non-positive
(for super-solutions) or non-negative (for sub-solutions). Actually, even if the number of such discontinuities does not matter,  in practice it suffices to use only one point. Considering
$C^2$-smooth (on $\R$) functions  in  Definition \ref{D1}, we obtain the definition of the upper and lower solutions. Since in general it is rather difficult to find good initial approximations $\phi_\pm$ for the waves, the advantage  of working with less demanding super- and sub-solutions instead of  upper and lower solutions becomes quite evident.
The main result of this section is well known in several particular cases,  cf.  \cite[Lemma 2.5]{Ma}, \cite{wzE}, \cite[Lemma 15]{GT}. However, the proofs of it given in the mentioned  papers are not suitable to cope with the general situation described in Theorem \ref{main1}. Instead, here we are using the  mollification technique.
\begin{lemma} \label{L7}Let $C^\infty$-smooth function  $k_\delta: \R \to \R_+$ have compact support contained in  $[-\delta,\delta]$ and satisfy
$\int_Rk_\delta(s)ds=1$.  Suppose that $\phi_+, \phi_-$ are super- and sub-solutions given in Definition \ref{D1}. Then
$$\psi_\pm(t) = \int_{-\delta}^{\delta}\phi_\pm(t-s)k_\delta(s)ds, \quad t \in \R,$$
are $C^\infty$-smooth  functions such that
\begin{eqnarray}\label{5}
g_+(t,\delta):=D\psi_+''(t)  -c\psi'_+(t) +  \int_{-\delta}^{\delta}k_\delta(s)F(\phi_+)(t-s)ds \leq 0, \ t \in \R; \\ \label{6}
g_-(t,\delta):= D\psi_-''(t)  -c\psi'_-(t) +  \int_{-\delta}^{\delta}k_\delta(s)F(\phi_-)(t-s)ds \geq 0, \ t \in \R.
\end{eqnarray}
In addition, $\psi_\pm(t)$ possess  another properties of respective  $\phi_\pm(t)$  listed in Definition \ref{D1}. Furthermore, under assumptions of Theorem \ref{main1}, we have that $\phi_- \leq A\phi_-$, $A\phi_+ \leq \phi_+$,
where $A(\phi):= -\mathcal{I}[F(\phi)+L\phi]$.
\end{lemma}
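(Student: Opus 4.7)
The plan is to carry out the proof in three stages: (i) establish smoothness and preservation of the boundary/monotonicity properties of $\psi_\pm$, (ii) derive the key inequalities (5) and (6) by a careful distributional analysis of the jump in $\phi_\pm'$, and (iii) pass to the limit $\delta\to 0^+$ to obtain the comparison with $A\phi_\pm$.

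The first stage is standard: convolution with a $C^\infty$ compactly supported kernel yields $\psi_\pm\in C^\infty$; convolving a bounded, nonnegative, componentwise nondecreasing function with a nonnegative kernel of mass one preserves all three properties, and dominated convergence gives $\psi_\pm(\pm\infty)=\phi_\pm(\pm\infty)$. Non-constancy of $\psi_-$ is inherited from non-constancy of $\phi_-$ using that $\phi_-$ is nondecreasing.

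The main (and only delicate) point lies in stage (ii), namely accounting for the jump of $\phi_+'$ at $t_1$. Since $\phi_+$ is continuous on $\R$ and $C^2$ on $\R\setminus\{t_1\}$, its a.e.-defined classical derivative $\phi_+'$ is bounded and of bounded variation, and its distributional derivative equals $\phi_+''+J\delta_{t_1}$ with $J:=\phi_+'(t_1+)-\phi_+'(t_1-)\le 0$. Differentiating under the convolution thus gives
$$\psi_+'(t)=\int_{-\delta}^{\delta}\phi_+'(t-s)k_\delta(s)\,ds,\qquad \psi_+''(t)=\int_{-\delta}^{\delta}\phi_+''(t-s)k_\delta(s)\,ds+J\,k_\delta(t-t_1).$$
Substituting into the left side of (5) and using the super-solution inequality pointwise a.e.,
$$g_+(t,\delta)=\int_{-\delta}^{\delta}\bigl[D\phi_+''(t-s)-c\phi_+'(t-s)+F(\phi_+)(t-s)\bigr]k_\delta(s)\,ds+DJ\,k_\delta(t-t_1)\le 0,$$
since both summands are $\le 0$ (the diagonal of $D$ is nonnegative, $J\le 0$, $k_\delta\ge 0$). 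Inequality (6) is obtained symmetrically, the jump term now being nonnegative.

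For stage (iii), since $\psi_\pm\in C_b^2(\R,\R^m)$, one has $\mathcal{I}\mathcal{D}\psi_\pm=\psi_\pm$. Adding $L\psi_+$ to the inequality $g_+(t,\delta)\le 0$ gives $\mathcal{D}\psi_+\le -L\psi_+-(k_\delta*F(\phi_+))$; applying $\mathcal{I}$, whose kernel $N$ is componentwise non-positive, reverses the inequality, so
$$\psi_+=\mathcal{I}\mathcal{D}\psi_+\ge -\mathcal{I}\bigl[L\psi_++k_\delta*F(\phi_+)\bigr].$$
Letting $\delta\to 0^+$: $\psi_+\to\phi_+$ pointwise and boundedly, $p$-continuity of $L$ yields $L\psi_+\to L\phi_+$ pointwise and boundedly, and continuity of $F(\phi_+)\in C_b(\R,\R^m)$ yields $k_\delta*F(\phi_+)\to F(\phi_+)$ pointwise and boundedly. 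Integrability of $N$ (noted in the proof of Theorem \ref{main1} through the identity $L\int_\R N(s)\mathbf{1}\,ds=-\mathbf{1}$) enables the dominated convergence theorem, giving $\phi_+\ge-\mathcal{I}[F(\phi_+)+L\phi_+]=A\phi_+$. The bound $\phi_-\le A\phi_-$ follows by the symmetric argument, starting from (6). The main obstacle, already flagged above, is the distributional handling of the one-sided jump in $\phi_\pm'$; once this is cleanly written down, the remainder is routine convolution analysis.
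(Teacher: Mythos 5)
Your proof is correct and follows essentially the same route as the paper: the mollified derivative formula with the jump term $J\,k_\delta(t-t_1)$ (which the paper obtains by splitting the convolution at $t_1$ and integrating by parts rather than by distributional language), then the inequalities (\ref{5})--(\ref{6}) from the sign of the jump, and finally application of $\mathcal{I}$ with non-positive integrable kernel $N$ and the limit $\delta\to0^+$ via $p$-continuity of $L$ and dominated convergence. The only (harmless) slip is your claim that $\phi_-$ is nondecreasing --- Definition \ref{D1} does not impose monotonicity on sub-solutions (and the sub-solution actually used later is not monotone) --- but you invoke it only for the trivial non-constancy of $\psi_-$, which follows anyway from $\phi_-\ge 0$, $\phi_-\not\equiv 0$ and $\int_\R k_\delta(s)\,ds=1$.
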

\begin{proof} Clearly,
$$\psi_\pm(t) = \int_{t-\delta}^{t+\delta}\phi_\pm(s) k_\delta(t-s)ds$$
are $C^\infty$-smooth and have the same asymptotic, monotonicity and sign  properties as  $\phi_\pm(t)$. Next, for all $t \not \in (t_1-\delta, t_1+\delta)$,
it is immediate to calculate the first and the second derivatives
$$
\psi^{(j)}_\pm (t) = \int_{-\delta}^{\delta}\phi^{(j)}_\pm(t-s)k_\delta(s)ds, \ j =1,2.
$$
Now, if $t \in (t_1-\delta, t_1+\delta)$, we can use the representation
$$\psi_\pm(t) = \int_{t-\delta}^{t_1}\phi_\pm(s) k_\delta(t-s)ds+ \int_{t_1}^{t+\delta}\phi_\pm(s) k_\delta(t-s)ds$$
to find, after integrating by parts, that
$$
\psi'_\pm (t) = \int_{-\delta}^{\delta}\phi'_\pm(t-s)k_\delta(s)ds, $$
$$ \psi''_\pm (t) = \int_{-\delta}^{\delta}\phi''_\pm(t-s)k_\delta(s)ds + k_\delta(t-t_1)\left[\phi_\pm'(t_1+)-\phi_\pm'(t_1-) \right]
$$
(in the latter formula, we can set formally $\phi''_\pm(t_1) =0$).

Next,  after multiplying differential inequalities of Definition \ref{D1} evaluated in $t-s$ by $k_\delta(s)$ and integrating them between $-\delta$ and $+\delta$ with respect to $s$,
we easily obtain (\ref{5}) and (\ref{6}) in view of the assumed sign restrictions  on $\phi_\pm'(t_1+)-\phi_\pm'(t_1-)$.

Finally, we have that
 \begin{eqnarray}
\psi_+(t)& =& - \int_{\R}N(t-s) \left[-g_+(s,\delta) + L\psi_+(s)+  \int_{-\delta}^{\delta}k_\delta(s)F(\phi_+)(s-v)dv \right]ds \n\\
& \geq&
 \int_{\R}N(t-s) \left[ L\psi_+(s)+  \int_{-\delta}^{\delta}k_\delta(s)F(\phi_+)(s-v)dv \right]ds.\n
 \end{eqnarray}
By $p$-continuity of $L$ and the Lebesgue's dominated convergence theorem, after taking limit, as $\delta \to 0+$, in the latter inequality,
we find that $\phi_+ \geq A\phi_+$. The proof of inequality  $\phi_- \leq A\phi_-$  is similar and therefore it is omitted. \hfill $\square$
\end{proof}
\subsection{Curves defined by the characteristic equations}
\begin{lemma} \label{LL} Assume that $b \in (0,1)$ is a fixed number, $c, \tau > 0$. Then there exist finite $\tau(b)>0$ and a strictly decreasing continuous function $c_\#=c_\#(\tau) >0,\ \tau >\tau(b),$ satisfying
$
c_\#(\tau(b)-)=+\infty, \  c_\#(+\infty) =0
$
and such that  $\chi_1(z)$ has exactly two negative zeros $\mu_2(c) \leq \mu_1(c)$ (counting multiplicity) if and only if
either $\tau \in (0,\tau(b)]$ or $\tau > \tau(b)$ and
$c \leq c_\#(\tau)$. These zeros coincide only if $c=c_\#(\tau)$.  Function $\tau(b)$ is strictly decreasing on $(0,1)$, with $\tau(0+) = 1/e$ and $\tau(1-)= 0$.

\end{lemma}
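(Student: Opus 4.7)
The plan is to reduce the problem to analyzing intersections of two convex curves. For a negative zero $z = -\gamma < 0$ of $\chi_1$, make the substitutions $\lambda = c\gamma > 0$ and $\epsilon = c^{-2}$. Then $\chi_1(-\gamma) = 0$ is equivalent to
\[
\phi_\epsilon(\lambda) := \epsilon \lambda^2 + \lambda = \rho(\lambda) := \frac{e^{\lambda\tau}}{1-be^{\lambda\tau}}
\]
on the interval $(0, \lambda_{\max})$, where $\lambda_{\max} := \tau^{-1}\ln(1/b)$. Writing $\rho(\lambda) = 1/(e^{-\lambda\tau} - b)$ and computing, one gets $\rho''(\lambda) = \tau^2 e^{-\lambda\tau}(e^{-\lambda\tau}+b)/(e^{-\lambda\tau}-b)^3 > 0$, so $\rho$ is strictly convex on its domain. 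Moreover $\phi_\epsilon(0) = 0 < 1/(1-b) = \rho(0)$ and $\rho(\lambda_{\max}^-) = +\infty$, while $\phi_\epsilon$ stays finite. Since $\phi_\epsilon(\lambda)$ is strictly increasing in $\epsilon$ for each $\lambda > 0$, intersections are captured by level sets of $g(\lambda) := (\rho(\lambda) - \lambda)/\lambda^2$: the equation $\phi_\epsilon = \rho$ has precisely the same solutions as $g = \epsilon$.

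Next I would analyze $g : (0, \lambda_{\max}) \to \R$. The boundary values give $g(0^+) = +\infty$ (since $\rho(0)>0$) and $g(\lambda_{\max}^-) = +\infty$. The main technical step is to show that $g$ is U-shaped, i.e., possesses a unique interior global minimum. This reduces to proving that $g'(\lambda) = [\lambda\rho'(\lambda) + \lambda - 2\rho(\lambda)]/\lambda^3$ has a single zero, which one obtains through the substitution $w = e^{-\lambda\tau}$ and a careful monotonicity argument on the resulting rational expression. Granted this, $\epsilon_*(\tau) := \min g \geq 0$ is attained at a unique $\lambda_*(\tau)$, and the equation $\phi_\epsilon = \rho$ has zero, one (tangential), or two solutions according to whether $\epsilon < \epsilon_*$, $\epsilon = \epsilon_*$, or $\epsilon > \epsilon_*$. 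Because $\rho(\lambda)$ is strictly increasing in $\tau$ at each fixed $\lambda$, so is $g$, hence $\epsilon_*(\tau)$ is strictly increasing in $\tau$.

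Set $\tau(b) := \sup\{\tau > 0 : \epsilon_*(\tau) = 0\}$ and, for $\tau > \tau(b)$, $c_\#(\tau) := 1/\sqrt{\epsilon_*(\tau)}$. Strict monotonicity of $\epsilon_*$ yields strict decreasingness of $c_\#$. Continuity of $c_\#$ follows from the implicit function theorem applied to the tangency system $\phi_\epsilon = \rho$, $\phi_\epsilon' = \rho'$: its Jacobian in $(\lambda,\epsilon)$ at a tangent point reduces to $\lambda^2(\rho''(\lambda) - 2\epsilon)$, which is strictly positive because the tangency is non-degenerate (both curves are strictly convex with $\rho$ not a parabola). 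The boundary limits are straightforward: $c_\#(\tau(b)^+) = +\infty$ is direct from $\epsilon_*(\tau(b)^+) = 0$, while $c_\#(+\infty) = 0$ follows because $\lambda_{\max} = \ln(1/b)/\tau \to 0$ as $\tau \to \infty$, forcing $\rho$ to blow up arbitrarily close to the origin and hence $\epsilon_*(\tau) \to \infty$.

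Finally, to pin down $\tau(b)$, observe that $\epsilon_*(\tau(b)) = 0$ means that the straight line $\phi_0(\lambda) = \lambda$ is tangent to $\rho$, i.e., $\lambda = \rho(\lambda)$ and $1 = \rho'(\lambda)$. Introducing $s := 1 - be^{\lambda\tau} \in (0, 1)$ and using $\rho' = \tau e^{\lambda\tau}/(1-be^{\lambda\tau})^2$, a short elementary computation transforms the tangency system into the pair $\lambda\tau = s$ and $s + be^s = 1$, together with the formula $\tau(b) = bs^2/(1-s)$. The equation $s + be^s = 1$ defines $s = s(b)$ as a smooth implicit function on $(0,1)$ (the partial derivative in $s$ is $1 + be^s > 0$); differentiating gives $\dot s(1+be^s) = -e^s$ and, using $be^s = 1-s$, one computes the remarkably clean identity
\[
\frac{d\tau}{db} = \frac{s^2}{1-s} - \frac{be^s\, s}{(1-s)^2} = \frac{s^2}{1-s} - \frac{s}{1-s} = -s < 0,
\]
proving strict monotonicity. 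The limits $\tau(0^+) = 1/e$ (from $s \to 1^-$ with $1 - s \sim be$, whence $\tau = bs^2/(1-s) \to 1/e$) and $\tau(1^-) = 0$ (from $s \to 0^+$) follow by inspection. The main obstacle in this program is the uniqueness of the minimum of $g$ in the second step; the remaining pieces are routine convex analysis and implicit differentiation.
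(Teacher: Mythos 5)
Your reduction is the same one the paper uses (the substitution $\lambda=cz$, $\epsilon=c^{-2}$ turning $\chi_1(z)=0$ into a scalar equation between a parabola and the convex function $\rho$), and your endgame for $\tau(b)$ is identical to the paper's: the tangency/double-root system leads to the same parametrization ($s=\sigma$, $\tau=s^2e^{-s}$, $b=(1-s)e^{-s}$) and the same clean derivative $d\tau/db=-s$. That part of your argument is correct and complete.

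However, there is a genuine gap exactly where you flag it, and it is not a removable technicality: the claim that $g(\lambda)=(\rho(\lambda)-\lambda)/\lambda^2$ is U-shaped (equivalently, that $g=\epsilon$ has at most two solutions, with a double solution only at the minimum) is what delivers ``exactly two negative zeros,'' ``these zeros coincide only if $c=c_\#(\tau)$,'' and the threshold structure in $\epsilon$; without it you only get existence of \emph{some} negative zero for $\epsilon$ above the minimum of $g$. Your proposed route ($w=e^{-\lambda\tau}$ and a monotonicity argument on $g'$) is not obviously going to close: $g'$ is proportional to $h(\lambda)=\lambda\rho'(\lambda)+\lambda-2\rho(\lambda)$, and $h'=\lambda\rho''-\rho'+1$ is not single-signed on the nose, so ``a careful monotonicity argument'' is doing real, unexecuted work. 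The paper closes this hole differently and more cheaply: it observes that $\chi_1'''(x)\neq 0$ for all real $x\neq \ln b/(c\tau)$ (a direct computation with $w=e^{xc\tau}$ giving $\chi_1'''$ proportional to $w(w^2+4wb+b^2)/(w-b)^4$), so by Rolle's theorem $\chi_1$ has at most three real zeros to the right of its singularity; since it has exactly one non-negative real zero, at most two can be negative. Note that this counting genuinely uses the non-negative zero — the bound ``at most three'' restricted to your interval $(0,\lambda_{\max})$ alone does not rule out a third (tangential) negative root — so if you keep your $g$-based formulation you must either prove the single critical point of $g$ or import the paper's global third-derivative count. Two smaller points: your assertion $\epsilon_*(\tau)=\min g\geq 0$ is false for $\tau<\tau(b)$ (there $\min g<0$, which is precisely why negative zeros exist for every $c$); and the non-degeneracy $\rho''(\lambda_*)>2\epsilon$ that you invoke for the implicit function theorem is asserted rather than proved — it follows from $\rho'''\neq 0$ (the same third-derivative fact), since a degenerate tangency would force $\rho-\phi_\epsilon$ to change sign at $\lambda_*$, contradicting $\rho\geq\phi_{\epsilon_*}$.
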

\begin{proof} It is easy to find that $\chi_1'''(x)\not=0$ for all real $x \not= \ln b/(c\tau)$ and that $\chi_1(x)$ has exactly one non-negative real zero (which is simple, in addition). Thus $\chi_1(z)$ can have at most two negative zeros.  After introducing the change of variable $\lambda= cz, \ \epsilon = c^{-2}$, equation $\chi_1(z) =0$ takes the form
\begin{equation}\label{EE}
\epsilon z^2 -z = \frac{1}{e^{z\tau}-b}.
\end{equation}
It shows clearly that for each $\tau > 0$ and  $b\in (0,1)$ there exists a unique non-negative real number $\epsilon_\#$ such that (\ref{EE}) has negative solutions if and only
$\epsilon \geq \epsilon_\#$.  Thus $\epsilon_\# =0$ if and only if equation
\begin{equation}\label{EE2}
-z = \frac{1}{e^{z\tau}-b}, \ z <0.
\end{equation}
has negative solutions. Since the right-hand side of (\ref{EE2}) is an increasing function of $\tau$, we deduce that there exists some positive $\tau(b)$ such that (\ref{EE2})
has negative roots for all $\tau \in (0,\tau(b)]$ and does not have real roots if $\tau > \tau(b)$.  In other words, $\epsilon_\#=0$ if and only if $\tau \in (0,\tau(b)]$. Actually, the above arguments imply all above mentioned properties of $c_\#(\tau)$, see also (\ref{D12}) below. These arguments also shows that $\tau =\tau(b)$ can be determined as a unique postive number for which equation (\ref{EE2}) has a negative double root $z_0$. Thus
\begin{equation}\label{EE3}
-z_0 = \frac{1}{e^{z_0\tau}-b}, \quad -1 = \frac{-\tau e^{z_0\tau}}{(e^{z_0\tau}-b)^2},
\end{equation}
from which
$$
1=z_0^2\tau e^{z_0\tau},\,\,\, z_0 b -\frac{1}{z_0\tau} =1, \quad \mbox{so that,} \quad z_0\tau = -2(1+\sqrt{1+4b/\tau})^{-1}=:-\sigma, \ \sigma \in (0,1).
$$
Hence, we have obtained a  parametric solution of  (\ref{EE3}):
$\tau = \sigma^2e^{-\sigma}, \ b = (1-\sigma)e^{-\sigma},$ $ \sigma \in (0,1)$. Note that $\tau'(b) = -\sigma<0$, other properties of $\tau(b)$ are also obvious.
 \hfill $\square$
\end{proof}

\begin{lemma} \label{IT} The curves $c=c_*(\tau)$ and $c=c_\#(\tau)$, $\tau > \tau(b)$,  have exactly one intersection point $(\tau_0, c_0)$ where
$c_*'(\tau_0) >c_\#'(\tau_0)$.
\end{lemma}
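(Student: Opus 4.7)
The plan is to settle existence by a direct Intermediate Value Theorem argument on $h(\tau) := c_\#(\tau)-c_*(\tau)$, and to obtain uniqueness together with the transversality inequality simultaneously by a local sign computation based on implicit differentiation of the defining characteristic equations.

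For existence, $h$ is continuous on $(\tau(b),+\infty)$ by Lemmas \ref{L4} and \ref{L5}. Since $c_*$ is trapped in $[2,2(1-b)^{-1/2}]$ while $c_\#(\tau(b)+)=+\infty$ and $c_\#(+\infty)=0$, one has $h(\tau(b)+)=+\infty$ and $h(+\infty)=-2<0$; the IVT yields a zero $\tau_0$ with common value $c_0$. Let $\lambda_*>0$ and $\mu_*<0$ denote the corresponding double roots of $\chi_0$ and $\chi_1$. By the convexity/concavity splitting already used in Lemma \ref{L4} (the function $-\epsilon\lambda^2+\lambda$ is concave, whereas $1/(1-be^{-\lambda\tau})$ and $-e^{-\lambda\tau}/(1-be^{-\lambda\tau})$ are convex on the relevant half-line), one has $\chi_j''(z_*)>0$ at the respective tangencies, so the implicit function theorem makes $c_*$ and $c_\#$ differentiable near $\tau_0$.

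Now differentiate the defining system $\chi_0(\lambda_*)=\chi_0'(\lambda_*)=0$ implicitly in $\tau$. Because $\chi_0'(\lambda_*)=0$, the $d\lambda_*/d\tau$ term drops out, leaving $(\partial_\tau\chi_0)+(\partial_c\chi_0)\,c_*'(\tau_0)=0$. A short calculation gives
\[
c_*'(\tau_0) \,=\, -\frac{c_0 X}{\tau_0(1+X)},\qquad X:=\frac{b\tau_0 e^{-\lambda_* c_0\tau_0}}{(1-be^{-\lambda_* c_0\tau_0})^2}>0.
\]
The analogous treatment of $\chi_1(\mu_*)=\chi_1'(\mu_*)=0$ yields
\[
c_\#'(\tau_0) \,=\, \frac{c_0 Y}{\tau_0(1-Y)},\qquad Y:=\frac{\tau_0 e^{-\mu_* c_0\tau_0}}{(1-be^{-\mu_* c_0\tau_0})^2}.
\]
Reading $\chi_1'(\mu_*)=2\mu_*-c_0+c_0Y=0$ in light of $\mu_*<0$ forces $Y=1-2\mu_*/c_0>1$, so $c_\#'(\tau_0)<0$. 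Subtracting the two formulas and simplifying gives
\[
c_*'(\tau_0)-c_\#'(\tau_0) \,=\, \frac{c_0}{\tau_0}\cdot\frac{X+Y}{(1+X)(Y-1)} \,>\,0,
\]
which is exactly the inequality claimed in the lemma.

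Finally, this strict transversality forces uniqueness: at every zero of $h$, $h'(\tau_0)=c_\#'(\tau_0)-c_*'(\tau_0)<0$, so $h$ crosses strictly from positive to negative. If $h$ had two zeros, pick consecutive ones $\tau_1<\tau_2$; transversality gives $h(\tau)<0$ just to the right of $\tau_1$ and $h(\tau)>0$ just to the left of $\tau_2$, forcing (by continuity) an intermediate zero in $(\tau_1,\tau_2)$, which contradicts consecutiveness. Hence the intersection is unique. The only delicate step is the implicit-differentiation bookkeeping: one must verify the positivity of $X$ and $Y$ (using $1-be^{-\lambda_* c_0\tau_0}>0$ for $\lambda_*>0$ and the localization $\mu_*\in(\ln b/(c_0\tau_0),0)$ for $\chi_1$), and then extract $Y>1$ from the double-root condition; once these signs are in place, the transversality inequality is automatic.
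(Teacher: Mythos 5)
Your argument is correct and is essentially the paper's: both differentiate the double-root systems $\chi_0(\lambda_*)=\chi_0'(\lambda_*)=0$ and $\chi_1(\mu_*)=\chi_1'(\mu_*)=0$ implicitly in $\tau$, use the vanishing of $\chi_j'$ at the tangency to drop the $d z/d\tau$ term, and conclude transversality (hence uniqueness) from $\mu_*<0<\lambda_*$; your formulas in $X,Y$ are algebraically identical to the paper's (\ref{D11})--(\ref{D12}) after substituting $X=2\lambda_2/c-1$ and $Y=1-2\mu_2/c$. You are merely more explicit than the paper on the IVT existence step and the consecutive-zeros uniqueness step, which the paper states without detail.
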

\begin{proof}For $c=c_*(\tau)$ [respectively, $c=c_\#(\tau)$] the characteristic  function $\chi_0(z)$ [respectively, $\chi_1(z)$] has positive multiple zeros  $\lambda_2(\tau) = \lambda_1(\tau)$  [respectively, negative zeros $\mu_1(\tau)=\mu_2(\tau)$]. After introducing change of variables $\epsilon = c^{-2}(\tau), \ z = \lambda_2(\tau)c(\tau)$,  we observe that
$\epsilon, z,\tau$ satisfy the system
$$
F(z,\epsilon,\tau) =0, \quad F_z(z,\epsilon,\tau) =0, \quad \mbox{where} \quad  F(z,\epsilon,\tau) = \epsilon z^2-z + \frac{1}{1-be^{-z\tau}}.
$$
Thus, after differentiating the above equations with respect to $\tau$, we can find that
\begin{equation}\label{D11}
c_*'(\tau) = -\frac{c_*(\tau)}{\tau} + \frac{c^2_*(\tau)}{2\lambda_2(\tau)\tau} <0, \ \tau >0.
\end{equation}
Here we use that obvious fact that $\lambda_2(\tau) > c_*(\tau)/2$ for $\tau >0$.  Similarly,
\begin{equation} \label{D12}
c_\#'(\tau) = -\frac{c_\#(\tau)}{\tau} + \frac{c^2_\#(\tau)}{2\mu_2(\tau)\tau} <0.
\end{equation}
(Curiously, these differential relations coincide with equations in \cite[A.3: proof of Lemma 1.3, p. 66]{LMS} derived in a different situation). 
It is obvious that due to  the asymptotic properties of $c=c_*(\tau)$ and $c=c_\#(\tau)$ described in Lemmas \ref{L4} and \ref{L5}, the graphs of these functions have at least one intersection at some point  $(\tau_0, c_0)$.
Since $\mu_2(\tau_0) < 0 < \lambda_2(\tau_0)$, differential relations (\ref{D11}) and (\ref{D12}) yield
$c_*'(\tau_0) >c_\#'(\tau_0)$.  This implies the uniqueness of the intersection point $\tau_0$. \hfill $\square$
\end{proof}
\section*{Acknowledgments}  \noindent
This work was initiated during a research stay of S.T. at the S\~ao Paulo University at  Ribeir\~ao Preto,
 Brasil. It was  supported by  FAPESP (Brasil) project 18/06658-1 and partially by  FONDECYT  (Chile) project 1190712.  S.T.  acknowledges the very kind hospitality of the  DCM-USP  and expresses his sincere gratitude to the Professors  \mbox{M. Pierri and E. Hernández}  for their  support   and   hospitality.



\begin{thebibliography}{99}





\bibitem{BS}  R. Benguria, A. Solar,  An iterative estimation for disturbances of semi-wavefronts to the delayed Fisher-KPP equation,
arXiv:1806.04255, to apear in Proc. Amer. Math. Soc.



\bibitem{BNPR}  H. Berestycki, G. Nadin, B. Perthame and L. Ryzhik,  {The non-local Fisher-KPP equation: travelling
waves and steady states},  Nonlinearity {22}  (2009)
2813--2844.




\bibitem{ADN} A. Ducrot,  G. Nadin, {Asymptotic behaviour of traveling waves for the delayed Fisher-KPP equation,} J. Differential Equations  256 (2014)  3115--3140.


\bibitem{FZ}{J. Fang, X.-Q. Zhao, }  {Monotone wavefronts of the nonlocal Fisher-KPP
equation},  {Nonlinearity} {24} (2011) 3043--3054.



\bibitem{GT} { A.~Gomez, S.~Trofimchuk,}    {Monotone traveling wavefronts of the KPP-Fisher delayed equation},  J. Differential Equations 250 (2011) 1767--1787.

\bibitem{LMS} A. Gomez, S. Trofimchuk, Global continuation of monotone wavefronts, J. London Math. Soc., 89, (2014) 47--68.


\bibitem{HK}  J.K. Hale,  J. Kato, Phase space for retarded equations with infinite delay, Funkcial. Ekvac. 21 (1978) 11--41.

%
%
\bibitem{HeT} E. Hern\'andez, S. Trofimchuk,
Traveling wave solutions  for partial neutral differential equations,
preprint, 2018.

\bibitem{HW} E. Hern\'andez, J. Wu,
Traveling wave front for partial neutral differential equations,
Proc. Amer. Math. Soc. 146 (2018), 1603-1617.

\bibitem{HZ}  J. Huang, X. Zou,
Existence of traveling wavefronts of delayed reaction-diffusion systems without monotonicity, Discrete Continuous Dynamical Systems 9 (2003) 925-936.


\bibitem{KO} {M. K. Kwong, C. Ou,}  {Existence and nonexistence of monotone traveling waves for
the delayed Fisher equation,} {J. Differential Equations} { 249} (2010) 728--745.


\bibitem{LWT}  W.-T. Li, Z.-C. Wang, Travelling fronts in diffusive and cooperative
Lotka-Volterra system with nonlocal delays, ZAMP 58 (2007), 571-591.


\bibitem{LW} Y. Liu,  P. Weng,  Asymptotic pattern for a partial neutral functional differential equation, J. Differential Equations 258 (2015)  3688-3741.


\bibitem{YL}  Y. Liu,  Uniqueness of traveling wave solutions for a quasi-monotone reaction-diffusion equation with neutral type,  Pure Mathematics, 7(4) (2017) 310-321  https://doi.org/10.12677/pm.2017.74041.

\bibitem{Ma} S. Ma, {Traveling wavefronts for delayed reaction-diffusion systems via fixed point theorem}, J. Differential equations 171  (2001)  294-314.
%
%

\bibitem{MS}  R. H. Martin, H. L. Smith, Abstract functional differential equations and reaction-diffusion
systems, Trans. Amer. Math. Soc. 321 (1990) 1--44.


\bibitem{SZ}   J. W.-H.So, X. Zou,  Traveling waves for the diffusive  Nicholson's blowfles equation,  Applied Mathematics and Computation 122 (2001) 385-392.

\bibitem{SoT}    A. Solar, S Trofimchuk, A simple approach to the wave uniqueness problem,  	arXiv:1808.04857.

\bibitem{STa}   H. L. Smith, H. R. Thieme,  Monotone semiflows in scalar non-quasimonotone functional
differential equations,  J. Math. Anal. Appl. 150 (1990), 289-306.

\bibitem{STb}  H. L. Smith, H. R. Thieme, Strongly order preserving semiflows generated by functional
differential equations, J. Differential Equations  93 (1991), 322-363.

\bibitem{TPTJDE}  E. Trofimchuk, M. Pinto, S. Trofimchuk, Monotone waves for non-monotone and non-local monostable reaction-diffusion equations, J. Differential Equations 261 (2016) 1203--1236.

\bibitem{tat} {E. Trofimchuk, P. Alvarado,  S. Trofimchuk,} {On the geometry of wave solutions of a delayed reaction-diffusion
equation, }{J. Differential Equations, } {246} (2009) 1422-1444.

\bibitem{VT} V. Volpert, S. Trofimchuk,  Global continuation of monotone waves for bistable delayed equations with unimodal nonlinearities, arXiv:1706.03403v2.

\bibitem{WLR}   Z.-C. Wang, W.-T. Li and S. Ruan, Travelling wave fronts in reaction-diffusion systems with spatio-temporal delays,  J. Differential Equations 222 (2006) 185--232.

\bibitem{WL}   Z.-C. Wang, W.-T. Li, Monotone travelling fronts of a food-limited population model with nonlocal delay, Nonlin. Anal. Real World Appl. 8 (2007) 699-712.


\bibitem{wzI} {  J.~Wu, X.~Zou}, Asymptotic and periodic boundary value problems of mixed FDEs and wave solutions of lattice differential equations, J.  Differential Equations
135 (1997) 315--357.



\bibitem{wz} {  J.~Wu, X.~Zou},  {Traveling wave fronts of
reaction-diffusion systems with delay,} {J. Dynam. Diff. Eqns.},  13
(2001),   651--687

\bibitem{wzE} {  J.~Wu, X.~Zou},  Erratum to "Traveling wave fronts. of reaction-diffusion systems with delays"  [J . Dynam. Diff. Eq. 13, 651, 687 (2001)], J. Dynam.
Diff. Eqns,  20 (2008),  531--533.




\end{thebibliography}
\end{document}